%Typeset with LaTeX format

\documentclass[11pt]{amsart}
\usepackage{amssymb}
\usepackage{epsfig}
\usepackage{mathdots}
 \theoremstyle{plain}
\newtheorem{theorem}{Theorem}
\newtheorem{corollary}{Corollary}

\newtheorem{lemma}{Lemma}
\newtheorem{proposition}{Proposition}
\newtheorem{example}{Example}
\theoremstyle{definition}
\newtheorem{definition}{Definition}
\theoremstyle{remark}

\numberwithin{equation}{section}

\newcommand{\bT}{\begin{theorem}}
\newcommand{\eT}{\end{theorem}}
\newcommand{\bProp}{\begin{proposition}}
\newcommand{\eProp}{\end{proposition}}
\newcommand{\bE}{\begin{example}}
\newcommand{\eE}{\end{example}}
\newcommand{\bL}{\begin{lemma}}
\newcommand{\eL}{\end{lemma}}
\newcommand{\bP}{\begin{proof}}
\newcommand{\eP}{\end{proof}}
\newcommand{\bC}{\begin{corollary}}
\newcommand{\eC}{\end{corollary}}
\newcommand{\bD}{\begin{definition}}
\newcommand{\eD}{\end{definition}}
\newcommand{\be}{\begin{enumerate}}
\newcommand{\ee}{\end{enumerate}}
\newcommand{\beqa}{\begin{eqnarray*}}
\newcommand{\eeqa}{\end{eqnarray*}}
\newcommand{\beqaa}{\begin{eqnarray}}
\newcommand{\eeqaa}{\end{eqnarray}}
\newcommand{\ba}{\begin{array}}
\newcommand{\ea}{\end{array}}

\setbox0=\hbox{$+$}
\newdimen\plusheight
\plusheight=\ht0
\def\+{\;\lower\plusheight\hbox{$+$}\;}

\setbox0=\hbox{$-$}
\newdimen\minusheight
\minusheight=\ht0
\def\-{\;\lower\minusheight\hbox{$-$}\;}

\setbox0=\hbox{$\cdots$}
\newdimen\cdotsheight
\cdotsheight=\plusheight
%\ht0
\def\cds{\lower\cdotsheight\hbox{$\cdots$}}
\begin{document}

\title[A  Reciprocity Relation for WP-Bailey Pairs ]
       {A Reciprocity Relation for WP-Bailey Pairs}

\author{James Mc Laughlin}
\address{Mathematics Department\\
 25 University Avenue\\
West Chester University, West Chester, PA 19383}
\email{jmclaughl@wcupa.edu}

\author{Peter Zimmer}
\address{Mathematics Department\\
25 University Avenue\\
West Chester University, West Chester, PA 19383}
\email{pzimmer@wcupa.edu}

 \keywords{
 Bailey pairs, WP-Bailey Chains, WP-Bailey pairs, Lambert Series,
 Basic Hypergeometric Series, q-series, theta series, $q$-products}
 \subjclass[2000]{Primary: 33D15. Secondary:11B65, 05A19.}

\date{\today}

\begin{abstract}
We derive a new general transformation for WP-Bailey pairs by
considering the a certain limiting case of a WP-Bailey chain
previously found by the authors, and examine several consequences of
this new transformation. These consequences include new summation
formulae involving WP-Bailey pairs.

Other consequences include new proofs of some classical identities
due to Jacobi, Ramanujan and others, and indeed extend these
identities  to identities involving particular specializations of
arbitrary WP-Bailey
 pairs.

\end{abstract}

\maketitle

\section{Introduction}
In the present paper, we derive a new general transformation for
WP-Bailey pairs by considering the a certain limiting case of a
WP-Bailey chain previously found by the authors, and examine several
consequences of this new transformation. These consequences include
new expressions for various theta functions and new summation
formulae involving WP-Bailey pairs.

A \emph{WP-Bailey pair} ( see Andrews \cite{A01}) is a pair of
sequences $(\alpha_{n}(a,k,q)$,\\ $\beta_{n}(a,k,q))$  satisfying $\alpha_{0}(a,k,q)$
$=\beta_{0}(a,k,q)=1$, and for $n>0$,
%{\allowdisplaybreaks
\begin{align}\label{WPpair}
\beta_{n}(a,k,q) &= \sum_{j=0}^{n}
\frac{(k/a;q)_{n-j}(k;q)_{n+j}}{(q;q)_{n-j}(aq;q)_{n+j}}\alpha_{j}(a,k,q).
\end{align}
%}
If the
context is clear, we occasionally suppress the dependence on some or
all of $a$, $k$ and $q$.
For a WP-Bailey pair  $(\alpha_n(a,k),
\beta_n(a,k))$, define
{\allowdisplaybreaks\begin{multline}\label{6psi6eq1a}
F(a,k,q):=\sum_{n=1}^{\infty}\frac{(1-k
q^{2n})(q;q)_{n-1}\left(\frac{k^2}{a};q\right)_n(qa;q^2)_n}
{(1-k)\left(\frac{qa}{k},kq;q\right)_n\left(\frac{k^2q}{a};q^2\right)_n}
\left(\frac{-qa}{k}\right)^n\beta_n(a,k)\\
-
\sum_{n=1}^{\infty}\frac{(q^2;q^2)_{n-1}
\left(\frac{k^2}{a};q^2\right)_n}
{\left(\frac{q^2a^2}{k^2},q^2a;q^2\right)_n}
\left(\frac{qa}{k}\right)^{2n}\alpha_{2n}(a,k)\\
+\frac{\left(\frac{k^2}{a},\frac{q^3a^2}{k^2},q^3a,q^2;q^2\right)_{\infty}}
{\left(\frac{k^2q}{a},\frac{q^2a^2}{k^2},q^2a,q;q^2\right)_{\infty}}\sum_{n=0}^{\infty}\frac{
\left(\frac{k^2q}{a},q;q^2\right)_n}
{\left(\frac{q^3a^2}{k^2},q^3a;q^2\right)_n}
\left(\frac{qa}{k}\right)^{2n+1}\alpha_{2n+1}(a,k).
\end{multline}
}

The main result of the paper is the following reciprocity result for the function  $F(a,k,q)$.
\begin{theorem}\label{t1}
Let  $a$ and $k$ be non-zero complex numbers and $|q|$ a complex number such that $|q|<\max\{1, |a/k|,|k/a|\}$ and none of the denominators following vanish. Let $(\alpha_n(a,k), \beta_n(a,k))$ be a WP-Bailey pair. Let $F(a,k,q)$  be as at
\eqref{6psi6eq1a}.
Then
{\allowdisplaybreaks
\begin{multline}\label{6psi6eq1aa}
F(a,k,q)-F\left(\frac{1}{a},\frac{1}{k},q\right)\\
=\frac{aq}{k^2}
\displaystyle{\frac{\left(
aq,q/a,k^2/a,q^2a/k^2,k^2/q,q^3/k^2,q^2,q^2;q^2\right)_{\infty}}
   {\left(
aq/k^2,k^2q/a,a,q^2/a,k^2,q^2/k^2,q,q;q^2\right)_{\infty}}}\\
-\frac{a}{k}
\displaystyle{\frac{\left(
k^2/a,qa/k^2,-a,-q/a;q\right)_{\infty}(q^2,q^2;q^2)_{\infty}}
   {\left(
   k^2,q^2/k^2,a^2/k^2,q^2k^2/a^2;q^2\right)_{\infty}}}
 +
\frac{(a^2-k) (a-k^2) }{(1-a) (1-k) (a^2-k^2)}.
\end{multline}}
%\begin{multline}\label{6psi6eq1bb}G(k,q)+G\left(\frac{1}{k},q\right)=q
%\displaystyle{\frac{\left(k^2q,q/k^2,k^2/q,q^3/k^2,q^2,q^2,q^2,q^2;q^2\right)_{\infty}}
%  {\left(k^2,q^2/k^2,k^2,q^2/k^2,q,q,q,q;q^2\right)_{\infty}}}\\-k\displaystyle{\frac{\left(
%q,q,-k^2,-q/k^2;q\right)_{\infty}(q^2,q^2;q^2)_{\infty}}   {\left(
%   k^2,q^2/k^2,k^2,q^2/k^2;q^2\right)_{\infty}}} +\frac{k(1-k^3)  }{(1-k)(1-k^2)^2}.\end{multline}
\end{theorem}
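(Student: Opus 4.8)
The plan is to start from the WP-Bailey chain found by the authors in earlier work and exploit it in a degenerate limiting case. First I would substitute the defining relation \eqref{WPpair} for $\beta_{n}(a,k)$ into the first sum of \eqref{6psi6eq1a} and interchange the order of summation (legitimate in the stated range of $q$); writing $C_{n}(a,k)$ for the coefficient of $\beta_{n}$ in \eqref{6psi6eq1a}, the coefficient of each $\alpha_{j}(a,k)$ then becomes an inner sum $\sum_{n}C_{n}(a,k)(k/a;q)_{n-j}(k;q)_{n+j}/[(q;q)_{n-j}(aq;q)_{n+j}]$. The summation formula underlying the authors' chain, taken in the appropriate limit, evaluates this inner sum in closed form, and the point is that its value is: for $j=2m$ with $m\ge1$, exactly the coefficient of $\alpha_{2m}$ in the second sum of \eqref{6psi6eq1a}; and for $j=2m+1$ with $m\ge0$, exactly minus the coefficient of $\alpha_{2m+1}$ in the third sum --- the $(\,\cdot\,;q^{2})$ symbols and the separate even- and odd-index $\alpha$-sums in \eqref{6psi6eq1a} being placed there precisely for this to happen. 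Consequently all contributions of $\alpha_{j}$ with $j\ge1$ cancel, and $F$ collapses to the contribution of $\alpha_{0}=1$:
\[
F(a,k,q)=\sum_{n\ge1}\frac{1-kq^{2n}}{(1-kq^{n})(1-q^{n})}\,
\frac{(k^{2}/a,\,k/a;q)_{n}\,(qa;q^{2})_{n}}{(qa/k,\,aq;q)_{n}\,(k^{2}q/a;q^{2})_{n}}\left(\frac{-qa}{k}\right)^{n}=:R(a,k,q),
\]
a series independent of the particular WP-Bailey pair. In effect this reduces the theorem to its instance for the pair $\alpha_{j}=\delta_{j,0}$, $\beta_{n}=(k/a;q)_{n}(k;q)_{n}/[(q;q)_{n}(aq;q)_{n}]$.

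It then remains to prove that $R(a,k,q)-R(1/a,1/k,q)$ equals the right-hand side of \eqref{6psi6eq1aa}. Here I would use the partial-fraction identity $\dfrac{1-kq^{2n}}{(1-kq^{n})(1-q^{n})}=-1+\dfrac{1}{1-kq^{n}}+\dfrac{1}{1-q^{n}}$ to split $R(a,k,q)$ into a plain $q$-series and two Lambert-type series; expanding the factors $1/(1-kq^{n})$ and $1/(1-q^{n})$ geometrically and interchanging the resulting sums turns each Lambert piece into a convergent superposition of series of the same shape as the plain one, with the argument $-qa/k$ shifted to $-q^{m+1}a/k$ (with an extra factor $k^{m}$ in one case). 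Forming the difference $R(a,k,q)-R(1/a,1/k,q)$, the outcome should then be evaluated in closed form by a combination of bilateral $q$-series summations --- in the spirit of Ramanujan's ${}_{1}\psi_{1}$ and Bailey's ${}_{6}\psi_{6}$ summations --- and Lambert-series rearrangements, the two bases $q$ and $q^{2}$ occurring in $R$ accounting respectively for the base-$q$ and base-$q^{2}$ theta quotients in \eqref{6psi6eq1aa}; the finitely many exceptional low-order terms (those surviving from $\alpha_{0}=\beta_{0}=1$) should collapse to the rational summand $(a^{2}-k)(a-k^{2})/[(1-a)(1-k)(a^{2}-k^{2})]$. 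One of $R(a,k,q)$, $R(1/a,1/k,q)$ converges absolutely while the other is its analytic continuation in $q$ (all the infinite products in \eqref{6psi6eq1aa} being meromorphic), so it suffices to verify the identity on an open set and extend it.

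I expect the main obstacle to be this second step: exhibiting the reciprocity difference as a closed form. The delicate points will be organising the plain and Lambert pieces so that the bilateral summation formulas apply, keeping the base-$q$ and base-$q^{2}$ Pochhammer symbols cleanly separated so that both theta quotients in \eqref{6psi6eq1aa} come out exactly as written, and tracking every boundary term so that the rational summand appears with the correct coefficient and sign; I would also need to justify the interchanges of summation and the passage between the two regions of convergence. The first step, by contrast, should be essentially mechanical once the authors' chain and its limiting summation formula are on the table --- the only care required there being convergence and the verification that the closed forms of the inner sums match the even- and odd-index $\alpha$-sums of \eqref{6psi6eq1a} term by term.
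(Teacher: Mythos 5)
Your first step is sound in content: $F(a,k,q)$ is indeed independent of the chosen WP-Bailey pair, and the cancellation you describe of the $\alpha_j$-contributions for $j\ge 1$ is, by linearity, equivalent to the $b\to 1$ limit of the authors' earlier transformation \eqref{simsuma3}; the paper performs precisely this limit (rewriting \eqref{simsuma3} as \eqref{simsuma33}, dividing by $1-b$ and letting $b\to1$), so reducing the theorem to the trivial pair, i.e.\ to your series $R(a,k,q)$, is legitimate in principle, although you assert rather than verify the closed-form evaluation of the inner sums.

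The genuine gap is your second step, which is where the whole content of the theorem lives and which you leave as a hope. The paper does not attack $R(a,k,q)-R(1/a,1/k,q)$ directly via partial fractions, geometric expansion and unspecified bilateral summations. Instead, the $b\to1$ limit of the product side of \eqref{simsuma33} is computed by logarithmic differentiation (the quantity $H'(1)$ in \eqref{G'eq}), which exhibits $F(a,k,q)$ as an explicit sum of Lambert series and leads to the key identification \eqref{Ffeq}, namely $F(a,k,q)=\tfrac12 f(a/k,k,-1,q)+f(a,k^2,aq,q^2)$ with $f$ as in \eqref{fakzqeq}. The reciprocity then follows immediately from the previously established identity \eqref{wpeq7} (from \cite{McL09a}, also \cite{ALL01}), specialized as in \eqref{wpeq7a} and \eqref{wpeq7b}, together with an elementary simplification of the rational pieces, including the extra terms $\frac{q}{1-q}-\frac{k^2q/a}{1-k^2q/a}$ produced when $a\to 1/a$, $k\to 1/k$ is applied to the Lambert-series representation (these are what you wave at as ``exceptional low-order terms''). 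Without identifying $F$ with this specific combination of $f$'s, or supplying an equivalent closed-form evaluation, your plan for the difference $R(a,k,q)-R(1/a,1/k,q)$ --- that some combination of ${}_1\psi_1$/${}_6\psi_6$ summations ``should'' yield the two theta quotients and the rational term of \eqref{6psi6eq1aa} --- is a program, not a proof; the precise mechanism that produces the base-$q$ and base-$q^2$ products on the right-hand side is exactly the known Lambert-series reciprocity \eqref{wpeq7}, and that is the missing ingredient in your argument.
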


Implications of this result include new representations for various
theta functions. One example is contained in the following identity.
Recall that
\begin{equation}\label{psieq}
\psi(q)=\sum_{n=0}^{\infty}q^{n(n+1)/2}=\frac{(q^2;q^2)_{\infty}}{(q;q^2)_{\infty}}
\end{equation}
is Ramanujan's theta function. Let $\omega:=\exp(2 \pi i/3)$,   let
$\chi_0(n)$ denote the principal character modulo 3, and let
$\psi(q)$ be as defined at \eqref{psieq}. Then
{\allowdisplaybreaks\begin{align*}
9\sum_{n=1}^{\infty}\chi_0(n)\frac{nq^n}{1-q^{2n}}&=\frac{\psi^6(q)}{\psi^2(q^3)}
-\frac{\psi^3(q^{1/2})\psi^3(-q^{1/2})}{\psi(q^{3/2})\psi(-q^{3/2})}\\
&=3(1-\omega^2)\sum_{n=1}^{\infty}\frac{(1-\omega
q^{2n})(q;q)_{n-1}(\omega^2q;q^2)_n(-\omega q)^n}
{(1-q^{3n})(\omega q;q)_n(q;q^2)_n}\notag\\
&\phantom{asd}+3(1-\omega)\sum_{n=1}^{\infty}\frac{(1-\omega^2
q^{2n})(q;q)_{n-1}(\omega q;q^2)_n(-\omega^2 q)^n}
{(1-q^{3n})(\omega^2 q;q)_n(q;q^2)_n}.\notag
\end{align*}}

Another implication is the following transformation for WP-Bailey
pairs. If $(\alpha_n(a,k,q),\beta_n(a,k,q))$ is a WP-Bailey pair,
then {\allowdisplaybreaks
\begin{multline*}
\sum_{n=1}^{\infty}\frac{(1-k
q^{2n})(q;q)_{n-1}\left(\frac{k^2}{q};q\right)_n(q^2;q^2)_n}
{(1-k)\left(\frac{q^2}{k},kq;q\right)_n\left(k^2;q^2\right)_n}
\left(\frac{-q^2}{k}\right)^n\beta_n(q,k,q)
 \\
\phantom{asdfas}-\sum_{n=1}^{\infty}\frac{(q^2;q^2)_{n-1}
\left(\frac{k^2}{q};q^2\right)_nq^{4n}\alpha_{2n}(q,k,q)}
{\left(\frac{q^4}{k^2},q^3;q^2\right)_nk^{2n}}
+\frac{\left(k^2-q\right) \left(k-q^2\right)}{(1-k) (1-q) (k^2-q^2)}
\\
=k\frac{(k^2q,q/k^2,q^2,q^2;q^2)_{\infty}}{(k^2,q^2/k^2,q,q;q^2)_{\infty}}
\left(1+\sum_{n=0}^{\infty}\frac{
\left(\frac{k^2}{q^2},\frac{1}{q};q^2\right)_{n+1}q^{4n+4}\alpha_{2n+1}(q,k,q)}
{\left(\frac{q^3}{k^2},q^2;q^2\right)_{n+1}k^{2n+2}}\right).
\end{multline*}
}

Several other implications are to be found elsewhere in the paper.

\section{background}

The subject of basic hypergeometric series took a leap forward after Andrews development in
\cite{A01} of a  \emph{WP-Bailey Chain}, a mechanism for deriving new WP-bailey
pairs from existing pairs. In \cite{A01}, Andrews in fact
describes two such chains. Warnaar \cite{W03} found four additional
chains and Liu and Ma \cite{LM08} introduced the idea of a
\emph{general WP-Bailey chain}, and discovered one new specific
WP-Bailey chain. In \cite{MZ09}, the authors found three new
WP-Bailey chains.

Each WP-Bailey chain also implies a transformation connecting the terms
in an arbitrary WP-Bailey pair (see \eqref{simsuma3} below for an
example of such a transformation), leading to new transformation formulae  for basic hypergeometric series.

In \cite{McL09a}, the first author
of the present paper went in a somewhat different direction and found two new types of transformations for
WP-Bailey pairs.
\begin{theorem}[McL., \cite{McL09a}]\label{t1older}
If $(\alpha_n(a,k), \beta_n(a,k))$ is a WP-Bailey pair, then subject
to suitable convergence conditions, {\allowdisplaybreaks
\begin{multline}\label{wpeq8}
\sum_{n=1}^{\infty} \frac{(q\sqrt{k},-q\sqrt{k},z;q)_{n}(q;q)_{n-1}}
{\left(\sqrt{k},-\sqrt{k}, q k,\frac{q k}{z};q\right)_{n}}\left( \frac{q a}{ z }\right )^{n} \beta_n(a,k)\\
- \sum_{n=1}^{\infty}
\frac{\left(q\sqrt{\frac{1}{k}},-q\sqrt{\frac{1}{k}},\frac{1}{z};q\right)_{n}(q;q)_{n-1}}
{\left(\sqrt{\frac{1}{k}},-\sqrt{\frac{1}{k}}, \frac{q }{k},\frac{q
z}{k};q\right)_{n}}\left( \frac{q z}{ a }\right )^{n}
\beta_n\left(\frac{1}{a},\frac{1}{k}\right) -\\
 \sum_{n=1}^{\infty}\frac{(z;q)_{n}(q;q)_{n-1}}{\left(q a ,\frac{q
a}{z};q\right)_n}\left (\frac{q a}{z}\right)^{n}\alpha_n(a,k)
%\\
+
\sum_{n=1}^{\infty}\frac{\left(\frac{1}{z};q\right)_{n}(q;q)_{n-1}}{\left(\frac{q}{a}
,\frac{q z}{a};q\right)_n}\left (\frac{q
z}{a}\right)^{n}\alpha_n\left(\frac{1}{a},\frac{1}{k}\right)\\=
\frac{(a-k)\left(1-\frac{1}{z}\right)\left(1-\frac{ak}{z}\right)}
{(1-a)(1-k)\left(1-\frac{a}{z}\right)\left(1-\frac{k}{z}\right)}
+\frac{z}{k}\frac{\left(z,\frac{q}{z},\frac{k}{a},
\frac{qa}{k},\frac{ak}{z},\frac{qz}{ak},q,q;q\right)_{\infty}}
{\left(\frac{z}{k},\frac{qk}{z},\frac{z}{a},
\frac{qa}{z},a,\frac{q}{a},k,\frac{q}{k};q\right)_{\infty}}.
\end{multline}
}
\end{theorem}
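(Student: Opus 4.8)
The plan is to eliminate the WP-Bailey pair from \eqref{wpeq8} first, reducing it to a pair-free $q$-series identity, and then to prove that identity by very-well-poised summation. In each of the two $\beta$-sums on the left of \eqref{wpeq8} substitute the defining relation \eqref{WPpair} --- once for $\beta_n(a,k)$ and once for $\beta_n(1/a,1/k)$ --- and interchange the order of summation (legitimate by absolute convergence under the conditions discussed below). In the first $\beta$-sum the coefficient of $\alpha_j(a,k)$ is then $\sum_{n\ge\max\{1,j\}}c_n M_{nj}$, where $c_n=\frac{(1-kq^{2n})(q;q)_{n-1}(z;q)_n}{(1-k)(qk;q)_n(qk/z;q)_n}\left(\frac{qa}{z}\right)^n$ is the kernel of that sum and $M_{nj}=\frac{(k/a;q)_{n-j}(k;q)_{n+j}}{(q;q)_{n-j}(aq;q)_{n+j}}$. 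For $j\ge 1$ the substitution $n=j+m$ turns this inner sum into a very-well-poised ${}_6\phi_5$ with base parameter $kq^{2j}$, remaining numerator parameters $zq^j,\,q^j,\,k/a$, and argument $qa/z$; the classical non-terminating ${}_6\phi_5$ summation, valid for $|qa/z|<1$, collapses it --- after the surplus infinite products cancel --- to exactly $\frac{(z;q)_j(q;q)_{j-1}}{(qa;q)_j(qa/z;q)_j}\left(\frac{qa}{z}\right)^j$, which is the kernel appearing in the third sum of \eqref{wpeq8}. Since that sum enters \eqref{wpeq8} with a minus sign, the $j\ge 1$ terms of the first $\beta$-sum cancel against it; in the same way the second $\beta$-sum and the fourth sum cancel in their $j\ge 1$ parts, leaving only the $j=0$, i.e.\ $\alpha_0=1$, contributions. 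Thus \eqref{wpeq8} reduces to the pair-free identity that $\Phi(a,k,z)-\Phi(1/a,1/k,1/z)$ equals the right-hand side of \eqref{wpeq8}, where
\begin{equation*}
\Phi(a,k,z):=\sum_{n=1}^{\infty}\frac{(1-kq^{2n})(q;q)_{n-1}(z,k/a,k;q)_n}{(1-k)(q,qk,qk/z,aq;q)_n}\left(\frac{qa}{z}\right)^{n}.
\end{equation*}

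To prove this identity, rewrite the summand of $\Phi$ using $(q;q)_{n-1}/(q;q)_n=1/(1-q^n)$ and $(k;q)_n/(qk;q)_n=(1-k)/(1-kq^n)$ so that it carries the factor $\frac{1-kq^{2n}}{(1-q^n)(1-kq^n)}$, and split this by the partial fraction $\frac{1-kq^{2n}}{(1-q^n)(1-kq^n)}=-1+\frac{1}{1-q^n}+\frac{1}{1-kq^n}$. This exhibits $\Phi(a,k,z)$ --- and, after $a\mapsto 1/a,\ k\mapsto 1/k,\ z\mapsto 1/z$, also $\Phi(1/a,1/k,1/z)$ --- as a combination of series of geometric and Lambert type. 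Forming the difference, the pieces should reassemble, by Bailey's bilateral very-well-poised ${}_6\psi_6$ summation (equivalently, by a suitable non-terminating very-well-poised transformation), into the infinite product $\frac{z}{k}\cdot\frac{(z,q/z,k/a,qa/k,ak/z,qz/(ak),q,q;q)_\infty}{(z/k,qk/z,z/a,qa/z,a,q/a,k,q/k;q)_\infty}$, while the index-zero boundary contribution --- where the $1/(1-q^n)$ singularity sits, and which is handled by a limiting argument --- supplies the rational term $\frac{(a-k)(1-1/z)(1-ak/z)}{(1-a)(1-k)(1-a/z)(1-k/z)}$.

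The ``suitable convergence conditions'' are $|qa/z|<1$ and $|qz/a|<1$ (equivalently $|q|<|z/a|<|q|^{-1}$), nonvanishing of the denominators in \eqref{wpeq8}, and whatever mild growth restriction on the pair is needed for the four series to converge absolutely; these also validate the interchange of summation above and place us within the domain of the ${}_6\phi_5$ and ${}_6\psi_6$ summations. The main obstacle is the final step: because of the factor $1/(1-q^n)$ inherited from the $(q;q)_{n-1}$, one is manipulating a Lambert-type modification of a ${}_6\phi_5$ rather than a ${}_6\phi_5$ itself, so the delicate work is to recombine the split pieces of $\Phi(a,k,z)$ and $\Phi(1/a,1/k,1/z)$ into the right bilateral series and to extract the rational ``residue'' term cleanly. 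A lesser checkpoint is verifying the parameter identification in the ${}_6\phi_5$ above and that its evaluation returns precisely the kernel occurring in the $\alpha$-sums of \eqref{wpeq8}, with every surplus infinite product cancelling.
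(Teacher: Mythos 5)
The paper does not actually prove Theorem \ref{t1older}: it is quoted from \cite{McL09a}, and the ingredients of that proof are recalled here as the function $f(a,k,z,q)$ of \eqref{fakzqeq} together with the difference identity \eqref{wpeq7} (proved in \cite{McL09a} and, in its Lambert-series form, in \cite{ALL01}). Your first step reproduces that route correctly: inserting \eqref{WPpair} into the two $\beta$-sums, interchanging summation, and evaluating the inner sum by the nonterminating very-well-poised ${}_6\phi_5$ summation does collapse the coefficient of $\alpha_j$ for $j\ge 1$ to exactly the kernels of the two $\alpha$-sums in \eqref{wpeq8} (your parameter identification with base $kq^{2j}$, numerator parameters $zq^j,q^j,k/a$ and argument $qa/z$ checks out, and the surplus products cancel as claimed), while the $j=0$ terms give your $\Phi(a,k,z)$, which is precisely the paper's $f(a,k,z,q)$. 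So the reduction of \eqref{wpeq8} to $f(a,k,z,q)-f(1/a,1/k,1/z,q)=\mbox{RHS}$ is sound and is the same reduction used in \cite{McL09a}.

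The gap is in the second half, which is where essentially all of the remaining content lies. Your proposed mechanism for evaluating $f(a,k,z,q)-f(1/a,1/k,1/z,q)$ would not work as described: splitting $(1-kq^{2n})/((1-q^n)(1-kq^n))$ by partial fractions in the summation index does not exhibit $\Phi$ as ``series of geometric and Lambert type,'' because each resulting piece still carries the $n$-dependent factors $(z,k/a,k;q)_n/\bigl((qk/z,aq;q)_n\cdots\bigr)(qa/z)^n$, i.e.\ it is still a genuine basic hypergeometric series, and no bilateral series to which Bailey's ${}_6\psi_6$ could be applied is ever actually formed. What is required is precisely the nontrivial content of \eqref{fakzqeq}--\eqref{wpeq7}: first the expansion of $f$ into the four Lambert series $\sum kq^n/(1-kq^n)+\sum (a/z)q^n/(1-(a/z)q^n)-\sum aq^n/(1-aq^n)-\sum (k/z)q^n/(1-(k/z)q^n)$, which needs an analytic/partial-fraction argument in the parameters (or a limiting case of ${}_6\psi_6$), not a termwise split in $n$; and then the Andrews--Lewis--Liu evaluation of the resulting difference as the rational term plus the theta quotient. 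You correctly anticipate that the rational term arises from the $n=0$ completion terms and that a ${}_6\psi_6$-type summation underlies the product, but these statements are asserted rather than derived, so as written the decisive step remains unproved.
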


\begin{theorem}[McL., \cite{McL09a}]\label{c3}
If $(\alpha_n(a,k,q), \beta_n(a,k,q))$ is a WP-Bailey pair, then
subject to suitable convergence conditions, {\allowdisplaybreaks
\begin{multline}\label{wpeq2n}
\sum_{n=1}^{\infty} \frac{(1-k q^{2n})(z;q)_{n}(q;q)_{n-1}}
{(1-k)( q k,q k/z;q)_{n}}\left( \frac{q a}{ z }\right )^{n} \beta_n(a,k,q)\\
+  \sum_{n=1}^{\infty} \frac{(1+k q^{2n})(z;q)_{n}(q;q)_{n-1}}
{(1+k)( -q k,-q k/z;q)_{n}}\left( \frac{-q a}{ z }\right )^{n}
\beta_n(-a,-k,q)\\-2 \sum_{n=1}^{\infty} \frac{(1-k^2
q^{4n})(z^2;q^2)_{n}(q^2;q^2)_{n-1}} {(1-k^2)( q^2 k^2,q^2
k^2/z^2;q^2)_{n}}\left( \frac{q^2 a^2}{ z^2 }\right )^{n}
\beta_n(a^2,k^2,q^2)\\=
\sum_{n=1}^{\infty}\frac{(z;q)_{n}(q;q)_{n-1}}{(q a ,q
a/z;q)_n}\left (\frac{q a}{z}\right)^{n}\alpha_n(a,k,q)\\ +
\sum_{n=1}^{\infty}\frac{(z;q)_{n}(q;q)_{n-1}}{(-q a ,-q
a/z;q)_n}\left (\frac{-q a}{z}\right)^{n}\alpha_n(-a,-k,q)\\-2
\sum_{n=1}^{\infty}\frac{(z^2;q^2)_{n}(q^2;q^2)_{n-1}}{(q^2 a^2 ,q^2
a^2/z^2;q^2)_n}\left (\frac{q^2
a^2}{z^2}\right)^{n}\alpha_n(a^2,k^2,q^2).
\end{multline}
}
\end{theorem}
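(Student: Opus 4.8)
The plan is to reduce \eqref{wpeq2n} to a single pair-independent summation evaluated at three choices of parameters. Write
\[
S_\beta(a,k,q):=\sum_{n=1}^{\infty}\frac{(1-kq^{2n})(z;q)_{n}(q;q)_{n-1}}{(1-k)(qk,qk/z;q)_{n}}\left(\frac{qa}{z}\right)^{n}\beta_n(a,k,q),
\]
\[
S_\alpha(a,k,q):=\sum_{n=1}^{\infty}\frac{(z;q)_{n}(q;q)_{n-1}}{(qa,qa/z;q)_{n}}\left(\frac{qa}{z}\right)^{n}\alpha_n(a,k,q),
\]
so that \eqref{wpeq2n} asserts $S_\beta(a,k,q)+S_\beta(-a,-k,q)-2S_\beta(a^2,k^2,q^2)=S_\alpha(a,k,q)+S_\alpha(-a,-k,q)-2S_\alpha(a^2,k^2,q^2)$.

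First I would establish a base identity $S_\beta(a,k,q)-S_\alpha(a,k,q)=\gamma_0(a,k,q)$ whose right-hand side depends only on $a,k,q,z$ and not on the chosen pair. To do this, substitute the defining relation \eqref{WPpair} for $\beta_n$ into $S_\beta$, interchange the order of summation (justified by absolute convergence when $|qa/z|<1$), and put $n=j+m$. The inner sum over $m$ is a very-well-poised ${}_6\phi_5$ whose six numerator parameters are $kq^{2j}$, $\pm q(kq^{2j})^{1/2}$, $zq^{j}$, $q^{j}$, $k/a$ and whose argument reduces to $qa/z$, matching the power carried by the summand; the $q$-analogue of Dougall's ${}_6\phi_5$ summation collapses it to a ratio of infinite products. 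After the telescopings $\frac{(k;q)_n}{(1-k)(qk;q)_n}=\frac{1}{1-kq^{n}}$ and $\frac{(k;q)_{2j}}{(kq;q)_{2j}}=\frac{1-k}{1-kq^{2j}}$, every infinite product cancels and the coefficient of $\alpha_j(a,k,q)$ for each $j\ge 1$ simplifies to exactly $\frac{(z;q)_{j}(q;q)_{j-1}}{(qa,qa/z;q)_{j}}(qa/z)^{j}$, the coefficient of $\alpha_j$ in $S_\alpha$. Hence the terms with $j\ge 1$ reproduce $S_\alpha(a,k,q)$ and cancel, leaving only the $j=0$ contribution (times $\alpha_0=1$), the pair-independent quantity
\[
\gamma_0(a,k,q)=\sum_{n\ge 1}\frac{(1-kq^{2n})(z;q)_n(k/a;q)_n(k;q)_n(q;q)_{n-1}}{(1-k)(qk,qk/z,aq;q)_n(q;q)_n}\left(\frac{qa}{z}\right)^{n}.
\]

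Since this base identity holds verbatim at $(a,k,q)$, at $(-a,-k,q)$, and at $(a^2,k^2,q^2)$ (with $z$ replaced by $z^2$ in the last), subtracting the two sides of \eqref{wpeq2n} leaves precisely $\gamma_0(a,k,q)+\gamma_0(-a,-k,q)-2\gamma_0(a^2,k^2,q^2)$, so the theorem is equivalent to the vanishing of this combination. To evaluate $\gamma_0$ I would use the partial fraction $\frac{1-kq^{2n}}{(1-kq^n)(1-q^n)}=-1+\frac{1}{1-kq^n}+\frac{1}{1-q^n}$, which exhibits $\gamma_0$ as one ${}_3\phi_2$-type series together with two Lambert-type series, and resum these to a rational term plus theta quotients.

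The hard part will be this final vanishing statement, which is genuinely global rather than term-by-term: for odd $n$ the summands of the $(a,k,q)$ and $(-a,-k,q)$ copies do not cancel, and the base-change relations $(z^2;q^2)_m=(z;q)_m(-z;q)_m$ and $(aq;q)_{2m}(-aq;q)_{2m}=(a^2q^2;q^2)_{2m}$ connect the base-$q$ and base-$q^2$ products only after a common denominator is formed. I therefore expect to argue on the closed form of $\gamma_0$, verifying the three-term relation as an identity among theta quotients by splitting each base-$q$ product into its even and odd parts. This is exactly the point at which the weights $1,1,-2$ attached to $(a,k)$, $(-a,-k)$, $(a^2,k^2)$ are forced, and at which the rational and theta remainders of $\gamma_0$ cancel so that no closed-form term survives on the right of \eqref{wpeq2n}.
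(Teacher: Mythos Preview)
The paper does not prove this theorem; it is quoted as background from \cite{McL09a}. However, the paper records at \eqref{fakzqeq} exactly the identity that makes the proof short, so one can compare your plan against that route.

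Your reduction is correct and is the same first step as in \cite{McL09a}: after inserting \eqref{WPpair} into $S_\beta$ and summing the inner ${}_6\phi_5$, the difference $S_\beta(a,k,q)-S_\alpha(a,k,q)$ is precisely the pair-independent series you call $\gamma_0(a,k,q)$, and this is literally the function $f(a,k,z,q)$ of \eqref{fakzqeq}. Where your plan diverges is in the last step. The paper (again quoting \cite{McL09a}) records the Lambert-series evaluation
\[
f(a,k,z,q)=\sum_{n\ge1}\frac{kq^n}{1-kq^n}+\sum_{n\ge1}\frac{(a/z)q^n}{1-(a/z)q^n}-\sum_{n\ge1}\frac{aq^n}{1-aq^n}-\sum_{n\ge1}\frac{(k/z)q^n}{1-(k/z)q^n},
\]
and in this form the three-term relation is immediate: applying $\dfrac{x}{1-x}-\dfrac{x}{1+x}=\dfrac{2x^2}{1-x^2}$ termwise gives
\[
f(a,k,z,q)+f(-a,-k,z,q)=2f(a^2,k^2,z^2,q^2),
\]
so $\gamma_0(a,k,q)+\gamma_0(-a,-k,q)-2\gamma_0(a^2,k^2,q^2)=0$ with no theta manipulation at all.

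Your proposed route---partial-fractioning $\dfrac{1-kq^{2n}}{(1-kq^n)(1-q^n)}$ and then trying to pass to theta quotients---is not wrong, but it is harder and somewhat misleading: the residual factors $(z;q)_n(k/a;q)_n/(qk/z,aq;q)_n$ do not disappear under that split, so you would still need the full ${}_6\phi_5$ evaluation (or an equivalent telescoping) to reach a closed form, and the ``identity among theta quotients'' you anticipate is in fact the Lambert identity above in disguise. In short, your architecture is right; the efficient way to finish is to recognise $\gamma_0=f$ and use the Lambert-series form recorded at \eqref{fakzqeq}, after which the cancellation is termwise rather than global.
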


Some similar results were obtained in two other papers,
\cite{McL09b, McL09c}, and various consequences of the
transformations found were also examined. The results in the present paper may be viewed
as deriving from a  continuation of the investigations in the papers alluded to above.

\section{Proof of the main identity}

Before coming to the proofs, we recall an identity derived by the present authors in \cite{McLZ08}.
\begin{theorem}\text{$($}\cite[Mc Laughlin, Zimmer]{McLZ08}\text{$)$}
If $(\alpha_n(a,k), \beta_n(a,k)$ is a WP-Bailey pair, then
{\allowdisplaybreaks
\begin{multline}\label{simsuma3}
\frac{ (q a b/k, k q/b;q)_{\infty}}
 { (k q,q a/k;q)_{\infty}}\\
 \times  \sum_{n=0}^{\infty}
\frac{(q\sqrt{k}, -q\sqrt{k},k^2/ab,b,\sqrt{q a},- \sqrt{q
a};q)_n}{(\sqrt{k},-\sqrt{k}, q a b/k,k q/b, k \sqrt{q/a}, -
k\sqrt{q/a};q)_n}\left ( \frac{-q
a}{k}\right)^n \beta_n \\
= \frac{\left(\frac{q k^2}{ab}, b q,\frac{ q^2 a^2 b}{k^2}, \frac{q^2
a}{b};q^2\right)_{\infty}}{\left(q,\frac{k^2 q}{a}, q^2 a, \frac{q^2
a^2}{k^2};q^2\right)_{\infty}}
%\\\times
\sum_{n=0}^{\infty}\frac{\left(\frac{k^2}{a b},b;q^2
\right)_{n}}{\left(\frac{q^2 a^2 b}{k^2}, \frac{q^2 a}{b};
q^2\right)_{n}} \left(\frac{-q a}{k}\right)^{2n} \alpha_{2n}
\phantom{asdsfasdf}\\
+ \frac{\left(\frac{k^2}{ab}, b,\frac{ q^3 a^2 b}{k^2}, \frac{q^3
a}{b};q^2\right)_{\infty}}{\left(q,\frac{k^2 q}{a}, q^2 a, \frac{q^2
a^2}{k^2};q^2\right)_{\infty}}
%\\\times
\sum_{n=0}^{\infty}\frac{\left(\frac{k^2 q}{a b},b q;q^2
\right)_{n}}{\left(\frac{q^3 a^2 b}{k^2}, \frac{q^3 a}{b};
q^2\right)_{n}} \left(\frac{-q a}{k}\right)^{2n+1} \alpha_{2n+1}.
\end{multline}
}
\end{theorem}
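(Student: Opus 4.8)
The plan is to prove \eqref{simsuma3} by substituting the defining relation \eqref{WPpair} for $\beta_n$ into the left side, interchanging the order of summation, and then evaluating the coefficient of each $\alpha_j$. Writing the left side as $C\sum_{n\ge 0} c_n\,\beta_n$, where $C=(qab/k,kq/b;q)_\infty/(kq,qa/k;q)_\infty$ and $c_n$ is the displayed very-well-poised summand, I would insert $\beta_n=\sum_{j=0}^{n}\frac{(k/a;q)_{n-j}(k;q)_{n+j}}{(q;q)_{n-j}(aq;q)_{n+j}}\alpha_j$ and reverse the two sums to obtain $C\sum_{j\ge 0}\alpha_j\,T_j$ with $T_j=\sum_{n\ge j}c_n\frac{(k/a;q)_{n-j}(k;q)_{n+j}}{(q;q)_{n-j}(aq;q)_{n+j}}$. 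Since $(\alpha_n,\beta_n)$ is arbitrary, the desired identity is then \emph{equivalent} to the family of evaluations $C\,T_{2t}=R_{2t}$ and $C\,T_{2t+1}=R_{2t+1}$, where $R_{2t}$ and $R_{2t+1}$ are the coefficients of $\alpha_{2t}$ and $\alpha_{2t+1}$ read off from the two sums on the right of \eqref{simsuma3}. Thus the whole proof reduces to computing the inner sum $T_j$ in closed form.

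The structural observation that drives everything is that the $\pm$ pairs in $c_n$ collapse under $(x;q)_n(-x;q)_n=(x^2;q^2)_n$: one has $\frac{(q\sqrt k,-q\sqrt k;q)_n}{(\sqrt k,-\sqrt k;q)_n}=\frac{1-kq^{2n}}{1-k}$, together with $(\sqrt{qa};q)_n(-\sqrt{qa};q)_n=(qa;q^2)_n$ and $(k\sqrt{q/a};q)_n(-k\sqrt{q/a};q)_n=(k^2q/a;q^2)_n$. Hence $c_n=\frac{1-kq^{2n}}{1-k}\frac{(k^2/ab,b;q)_n\,(qa;q^2)_n}{(qab/k,kq/b;q)_n\,(k^2q/a;q^2)_n}(-qa/k)^n$, a \emph{mixed} base-$q$/base-$q^2$ (quadratic) very-well-poised series. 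This mixed base is exactly the source of the base-$q^2$ products and the even/odd dichotomy on the right: setting $n=j+m$ and absorbing the kernel factors $(k/a;q)_m$, $(k;q)_{2j+m}$, $(aq;q)_{2j+m}$, the base-$q$ Pochhammers in the $m$-sum shift by $q^{\,j}$ while the base-$q^2$ factors $(qa;q^2)_{j+m}/(k^2q/a;q^2)_{j+m}$ shift by $q^{2j}$, and these two shifts interact differently according as $j$ is even or odd.

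Concretely, I would evaluate $T_j$ by recognizing the $m$-sum as a very-well-poised quadratic series with base parameter $A=kq^{2j}$ (the factor $1-kq^{2(j+m)}$ supplying the well-poised term) and then applying an appropriate very-well-poised quadratic summation formula. The evaluation returns a ratio of infinite $q$- and $q^2$-products times a $j$-dependent factor; combining that factor with the prefactor $C$ and separating $j=2t$ from $j=2t+1$ should reproduce precisely the two product prefactors and the two base-$q^2$ coefficients multiplying $\alpha_{2n}$ and $\alpha_{2n+1}$ in \eqref{simsuma3}.

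The hard part will be this inner quadratic summation together with the accompanying product bookkeeping: I must identify the exact quadratic summation whose parameters match those produced after the $q^{\,j}$/$q^{2j}$ shifts, check that the very-well-poised balancing condition holds for the argument $-qa/k$, and then simplify the resulting products so that the $j$-even and $j$-odd cases telescope into the two clean prefactors. A secondary but genuine obstacle is justifying the interchange of the double sum and the use of the summation formula, which requires absolute convergence in the relevant region and the nonvanishing of the denominators, as is standard for results of this ${}_6\psi_6$ type.
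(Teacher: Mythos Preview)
The paper does not actually prove this theorem: it is quoted verbatim from \cite{McLZ08} as background (``we recall an identity derived by the present authors in \cite{McLZ08}''), and is then used as the starting point for the proof of Theorem~\ref{t1}. So there is no proof in the present paper to compare your proposal against.

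That said, your strategy is the standard and correct one for establishing a WP-Bailey chain identity of this type, and is almost certainly what \cite{McLZ08} does. Since the identity must hold for an \emph{arbitrary} WP-Bailey pair, substituting \eqref{WPpair} and comparing coefficients of $\alpha_j$ is forced; the content lies entirely in the evaluation of the inner sum $T_j$. Your structural observations are right: the $\pm$ pairs collapse to the very-well-poised factor $(1-kq^{2n})/(1-k)$ and to the base-$q^2$ Pochhammers $(qa;q^2)_n/(k^2q/a;q^2)_n$, and after the shift $n=j+m$ the inner $m$-sum is a quadratic very-well-poised series in base parameter $kq^{2j}$. The one place where your outline is still genuinely incomplete is the identification of the specific quadratic summation: you will need a summation of Gasper--Rahman type (a quadratic analogue of Jackson's ${}_6\phi_5$/${}_8\phi_7$ summation) whose parameter pattern matches $(k^2/ab,b,k/a)$ against $(qab/k,kq/b,aq)$ together with the base-$q^2$ pair, and you should expect the product output to split naturally according to the parity of $j$ because the base-$q^2$ factors shift by $q^{2j}$ while the base-$q$ factors shift by $q^j$. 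Pinning down that summation and carrying out the product simplification is the real work; everything else in your plan is routine.
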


We also recall a result from \cite{McL09a}, namely that if $f(a,k,z,q)$ is as defined by
{\allowdisplaybreaks\begin{align}\label{fakzqeq}
f(a,&k,z,q)=
\sum_{n=1}^{\infty}
\frac{(q\sqrt{k},-q\sqrt{k},k,z,k/a;q)_{n}}
{(\sqrt{k},-\sqrt{k}, q k,q k/z,q a;q)_{n}(1-q^n)}\left( \frac{q a}{ z }\right )^{n}\\
&=-\sum_{n=1}^{\infty}
\frac{(q\sqrt{a},-q\sqrt{a},a,z,a/k;q)_{n}}
{(\sqrt{a},-\sqrt{a}, q a,q a/z,q k;q)_{n}(1-q^n)}\left( \frac{q k}{ z }\right )^{n}\notag\\
&=\sum_{n=1}^{\infty}\frac{k q^n}{1-kq^n}+ \sum_{n=1}^{\infty}\frac{ q^n a/z}{1-q^n a/z}-
\sum_{n=1}^{\infty}\frac{a q^n}{1-aq^n}- \sum_{n=1}^{\infty}\frac{ q^n k/z}{1-q^n k/z},\notag
\end{align}
}
 then
\begin{multline}\label{wpeq7}
f(a,k,z,q)-f\left(\frac{1}{a},\frac{1}{k},\frac{1}{z},q\right)=
\frac{(a-k)(1-1/z)(1-ak/z)}{(1-a)(1-k)(1-a/z)(1-k/z)}\\
+\frac{z}{k}\frac{(z,q/z,k/a,qa/k,ak/z,qz/ak,q,q;q)_{\infty}}
{(z/k,qk/z,z/a,qa/z,a,q/a,k,q/k;q)_{\infty}}.
\end{multline}
We remark that the identity \eqref{wpeq7} was also proved by the authors in  \cite{ALL01}, for the final form of $f(a,k,z,q)$ above. We note two special cases for the proof below.
Firstly,
\begin{equation}\label{fakzeqa}
f(a/k,k,-1,q)=2\sum_{n=1}^{\infty}\frac{k q^n}{1-k^2q^{2n}}
-2\sum_{n=1}^{\infty}\frac{a q^n/k}{1-a^2q^{2n}/k^2}
\end{equation}
\begin{multline}\label{wpeq7a}
f(a/k,k,-1,q)-f\left(k/a,1/k,-1,q\right)\\=
2\frac{(a/k-k)(1+a)}{(1-a^2/k^2)(1-k^2)}
-2\frac{a}{k}\frac{(k^2/a,qa/k^2,-a,-q/a;q)_{\infty}(q^2,q^2;q^2)_{\infty}}
{(k^2,q^2/k^2,a^2/k^2,q^2 k^2/a^2;q^2)_{\infty}}.
\end{multline}
Secondly,
\begin{multline}\label{fakzeqb}
f\left(a,k^2,aq,q^2\right)\\=
\sum_{n=1}^{\infty}\frac{k^2 q^{2n}}{1-k^2q^{2n}}+ \sum_{n=1}^{\infty}\frac{ q^{2n}/q }{1-q^{2n}/q}-
\sum_{n=1}^{\infty}\frac{a q^{2n}}{1-aq^{2n}}- \sum_{n=1}^{\infty}\frac{ \frac{k^2}{aq}q^{2n}}{1-\frac{k^2}{aq}q^{2n}},
\end{multline}
{\allowdisplaybreaks\begin{multline}\label{wpeq7b}
f\left(a,k^2,aq,q^2\right)
-f\left(\frac{1}{a},\frac{1}{k^2},\frac{1}{aq},q^2\right)\\=
\frac{(a-k^2)\left(1-\frac{1}{aq}\right)
\left(1-\frac{k^2}{q}\right)}
{(1-a)(1-k^2)\left(1-\frac{1}{q}\right)
\left(1-\frac{k^2}{aq}\right)}\\
+\frac{aq}{k^2}
\frac{(aq,q/a,k^2/a,q^2 a/k^2,k^2/q,q^3/k^2,q^2,q^2;q^2)_{\infty}}
{(aq/k^2, k^2q/a,q,q,a,q^2/a,k^2,q^2/k^2;q^2)_{\infty}}.
\end{multline}}

\begin{proof}[Proof of Theorem \ref{t1}]
Rewrite \eqref{simsuma3} as
{\allowdisplaybreaks
\begin{multline}\label{simsuma33}
\frac{ (q a b/k, k q/b;q)_{\infty}}
 { (k q,q a/k;q)_{\infty}}\\
 \times  \sum_{n=1}^{\infty}
\frac{(1-k q^{2n})(k^2/ab,b;q)_n (qa;q^2)_n}
{(1-k)(q a b/k,k q/b;q)_n(k^2q/a;q^2)_n}\left ( \frac{-q
a}{k}\right)^n \beta_n(a,k) \\
-\frac{\left(\frac{q k^2}{ab}, b q,\frac{ q^2 a^2 b}{k^2}, \frac{q^2
a}{b};q^2\right)_{\infty}}{\left(q,\frac{k^2 q}{a}, q^2 a, \frac{q^2
a^2}{k^2};q^2\right)_{\infty}}
%\\\times
\sum_{n=1}^{\infty}\frac{\left(\frac{k^2}{a b},b;q^2
\right)_{n}}{\left(\frac{q^2 a^2 b}{k^2}, \frac{q^2 a}{b};
q^2\right)_{n}} \left(\frac{q a}{k}\right)^{2n} \alpha_{2n}(a,k)
\phantom{asdsfasdf}\\
+ \frac{\left(\frac{k^2}{ab}, b,\frac{ q^3 a^2 b}{k^2}, \frac{q^3
a}{b};q^2\right)_{\infty}}{\left(q,\frac{k^2 q}{a}, q^2 a, \frac{q^2
a^2}{k^2};q^2\right)_{\infty}}
%\\\times
\sum_{n=0}^{\infty}\frac{\left(\frac{k^2 q}{a b},b q;q^2
\right)_{n}}{\left(\frac{q^3 a^2 b}{k^2}, \frac{q^3 a}{b};
q^2\right)_{n}} \left(\frac{q a}{k}\right)^{2n+1} \alpha_{2n+1}(a,k)\\
=\frac{\left(\frac{q k^2}{ab}, b q,\frac{ q^2 a^2 b}{k^2}, \frac{q^2
a}{b};q^2\right)_{\infty}}{\left(q,\frac{k^2 q}{a}, q^2 a, \frac{q^2
a^2}{k^2};q^2\right)_{\infty}}-\frac{ (q a b/k, k q/b;q)_{\infty}}
 { (k q,q a/k;q)_{\infty}}.
\end{multline}
}
If we   divide through by $1-b$  and then let $b \to 1$ on the left side of \eqref{simsuma33}, then  the result is $F(a,k,q)$. If we define
\[
H(b):=\frac{ (q a b/k, k q/b;q)_{\infty}}
 { (k q,q a/k;q)_{\infty}}\frac{\left(q,\frac{k^2 q}{a}, q^2 a, \frac{q^2
a^2}{k^2};q^2\right)_{\infty}}{\left(\frac{q k^2}{ab}, b q,\frac{ q^2 a^2 b}{k^2}, \frac{q^2
a}{b};q^2\right)_{\infty}}
\]
then we see that dividing the right side of \eqref{simsuma33} by $1-b$  gives
\[
\frac{\left(\frac{q k^2}{ab}, b q,\frac{ q^2 a^2 b}{k^2}, \frac{q^2
a}{b};q^2\right)_{\infty}}{\left(q,\frac{k^2 q}{a}, q^2 a, \frac{q^2
a^2}{k^2};q^2\right)_{\infty}}\frac{1-H(b)}{1-b},
\]
so that the result of letting $b\to 1$ is
{\allowdisplaybreaks
\begin{align}\label{G'eq}
H'(1)&=\sum_{n=1}^{\infty}\frac{k q^n}{1-kq^{n}}
-\sum_{n=1}^{\infty}\frac{a q^n/k}{1-aq^{n}/k}\\
&\phantom{a}+
\sum_{n=1}^{\infty}\frac{a^2 q^{2n}/k^2}{1-a^2q^{2n}/k^2}
+ \sum_{n=1}^{\infty}\frac{ q^{2n}/q }{1-q^{2n}/q}
-\sum_{n=1}^{\infty}\frac{a q^{2n}}{1-aq^{2n}}
- \sum_{n=1}^{\infty}\frac{ \frac{k^2}{aq}q^{2n}}{1-\frac{k^2}{aq}q^{2n}} \notag\\
&=\sum_{n=1}^{\infty}\frac{k q^n}{1-k^2q^{2n}}
-\sum_{n=1}^{\infty}\frac{a q^n/k}{1-a^2q^{2n}/k^2}\notag\\
&\phantom{asdf}+
\sum_{n=1}^{\infty}\frac{k^2 q^{2n}}{1-k^2q^{2n}}+ \sum_{n=1}^{\infty}\frac{ q^{2n}/q }{1-q^{2n}/q}-
\sum_{n=1}^{\infty}\frac{a q^{2n}}{1-aq^{2n}}- \sum_{n=1}^{\infty}\frac{ \frac{k^2}{aq}q^{2n}}{1-\frac{k^2}{aq}q^{2n}}\notag\\
&=\frac{1}{2}f(a/k,k,-1,q) + f\left(a,k^2,aq,q^2\right).\notag
\end{align}}
Here the first equality is by logarithmic differentiation (noting that $H(1)=1$), the second is by simple combination/separation of some of the series, and the final equality follows from \eqref{fakzeqa} and \eqref{fakzeqb}. Thus we have that
\begin{equation}\label{Ffeq}
F(a,k,q)=\frac{1}{2}f\left(\frac{a}{k},k,-1,q\right) + f\left(a,k^2,aq,q^2\right).
\end{equation}
Upon replacing $a$ with $1/a$ and $k$ with $1/k$ in $F(a,k,q)$ and the Lambert series above, we get
\[
F\left(\frac{1}{a},\frac{1}{k},q\right)=\frac{1}{2}f\left(\frac{k}{a},\frac{1}{k},-1,q\right) + f\left(\frac{1}{a},\frac{1}{k^2},\frac{1}{aq},q^2\right)
+\frac{q}{1-q}-\frac{\frac{k^2q}{a}}{1-\frac{k^2q}{a}}.
\]
Thus,
\begin{multline*}
F(a,k,q)-F\left(\frac{1}{a},\frac{1}{k},q\right)=\frac{1}{2}f\left(\frac{a}{k},k,-1,q\right)
-\frac{1}{2}f\left(\frac{k}{a},\frac{1}{k},-1,q\right)\\
+f\left(a,k^2,aq,q^2\right)-f\left(\frac{1}{a},\frac{1}{k^2},\frac{1}{aq},q^2\right)
-\frac{q}{1-q}+\frac{\frac{k^2q}{a}}{1-\frac{k^2q}{a}},
\end{multline*}
and \eqref{6psi6eq1a} follows from \eqref{wpeq7a} and \eqref{wpeq7b}, upon noting that
{\allowdisplaybreaks
\begin{multline*}
\frac{(a/k-k)(1+a)}{(1-a^2/k^2)(1-k^2)}
+
\frac{(a-k^2)\left(1-\frac{1}{aq}\right)
\left(1-\frac{k^2}{q}\right)}
{(1-a)(1-k^2)\left(1-\frac{1}{q}\right)
\left(1-\frac{k^2}{aq}\right)}\\
-\frac{q}{1-q}+\frac{\frac{k^2q}{a}}{1-\frac{k^2q}{a}}=
\frac{(a^2-k) (a-k^2) }{(1-a) (1-k) (a^2-k^2)}.
\end{multline*}
}
\end{proof}

One easy implication is the following  summation formula.

\begin{corollary}\label{c1}
Let  $a$ and $k$ be non-zero complex numbers and $|q|$ a complex number such that $|q|<\max\{1, |a/k|,|k/a|\}$ and suppose none of the denominators following vanish. Then
{\allowdisplaybreaks
\begin{multline}\label{c1eq}
\sum_{n=1}^{\infty}\frac{(1-k q^{2n})(q;q)_{n-1}\left(k^2/a,k,k/a;q\right)_n(qa;q^2)_n}
{(1-k)\left(qa/k,kq, aq,q;q\right)_n\left(k^2q/a;q^2\right)_n}
\left(\frac{-qa}{k}\right)^n\\
-
\sum_{n=1}^{\infty}\frac{(1- q^{2n}/k)(q;q)_{n-1}\left(a/k^2,1/k,a/k;q\right)_n\left(q/a;q^2\right)_n}
{(1-1/k)\left(qk/a,q/k, q/a,q;q\right)_n\left(aq/k^2;q^2\right)_n}
\left(\frac{-qk}{a}\right)^n\\
=\frac{aq}{k^2}
\displaystyle{\frac{\left(
aq,q/a,k^2/a,q^2a/k^2,k^2/q,q^3/k^2,q^2,q^2;q^2\right)_{\infty}}
   {\left(
aq/k^2,k^2q/a,a,q^2/a,k^2,q^2/k^2,q,q;q^2\right)_{\infty}}}\\
-\frac{a}{k}
\displaystyle{\frac{\left(
k^2/a,qa/k^2,-a,-q/a;q\right)_{\infty}(q^2,q^2;q^2)_{\infty}}
   {\left(
   k^2,q^2/k^2,a^2/k^2,q^2k^2/a^2;q^2\right)_{\infty}}}
 +
\frac{(a^2-k) (a-k^2) }{(1-a) (1-k) (a^2-k^2)}.
\end{multline}
}
\end{corollary}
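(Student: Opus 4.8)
The plan is to apply Theorem~\ref{t1} to the simplest WP-Bailey pair, the \emph{unit pair}, obtained by taking $\alpha_0(a,k,q)=1$ and $\alpha_n(a,k,q)=0$ for all $n\ge 1$. Substituting this $\alpha$-sequence into the defining relation \eqref{WPpair} forces
\[
\beta_n(a,k,q)=\frac{(k/a;q)_n(k;q)_n}{(q;q)_n(aq;q)_n},
\]
and since $\alpha_0=\beta_0=1$ this is indeed a WP-Bailey pair, valid for every choice of $a$ and $k$.

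First I would feed this pair into the definition \eqref{6psi6eq1a} of $F(a,k,q)$. In the second sum the summation variable appears only through $\alpha_{2n}(a,k)$ with $n\ge 1$, and in the third sum only through $\alpha_{2n+1}(a,k)$ with $n\ge 0$; in both ranges the index $2n$, respectively $2n+1$, is strictly positive, so every $\alpha$ that occurs vanishes and both sums are identically zero (in particular the third sum, though it starts at $n=0$, contributes $\alpha_1$ and not $\alpha_0$). Only the first sum survives; inserting the explicit $\beta_n$ above brings in the factors $(k,k/a;q)_n$ in the numerator and $(q,aq;q)_n$ in the denominator, and the result is precisely the first sum on the left of \eqref{c1eq}. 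Repeating the computation with $a$ replaced by $1/a$ and $k$ by $1/k$ (the same pair, now evaluated at the reciprocal parameters) again kills the two $\alpha$-sums, and after simplifying $(1/k)^2/(1/a)=a/k^2$, $(q/a)/(1/k)=qk/a$, $-(q/a)/(1/k)=-qk/a$, and so on, the surviving sum becomes exactly the second sum on the left of \eqref{c1eq}. Since Theorem~\ref{t1} computes $F(a,k,q)-F(1/a,1/k,q)$, this second sum enters with a minus sign, matching \eqref{c1eq}; and the right-hand side of \eqref{6psi6eq1aa} is verbatim the right-hand side of \eqref{c1eq}. The corollary then follows at once, the hypotheses on $a$, $k$ and $q$ being inherited directly from Theorem~\ref{t1}.

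There is no real obstacle here, since the statement is simply the specialization of Theorem~\ref{t1} to the unit WP-Bailey pair; the only points needing attention are the bookkeeping of the reciprocal substitution $a\mapsto 1/a$, $k\mapsto 1/k$ in the $q$- and $q^2$-Pochhammer symbols, and the observation that the third sum in \eqref{6psi6eq1a} --- which is indexed from $n=0$ --- still vanishes for the unit pair.
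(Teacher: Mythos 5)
Your proof is correct and is essentially the paper's own proof: the paper simply inserts this same pair (which it calls the ``trivial'' pair \eqref{tp}, reserving the name ``unit pair'' for \eqref{up}, where the $\beta_n$ rather than the $\alpha_n$ vanish) into \eqref{6psi6eq1aa}, and your computation of $\beta_n$ from \eqref{WPpair} and of the reciprocal substitution $a\mapsto 1/a$, $k\mapsto 1/k$ matches what is needed. The only discrepancy is the naming, not the mathematics.
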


\begin{proof}
Insert the``trivial" Bailey pair
\begin{align}\label{tp}
\alpha_{n}(a,q)&=\begin{cases} 1&n=0, \\
0, &n>0,
\end{cases} \\
\beta_n(a,q)&=\frac{(k,k/a;q)_n}{(aq,q;q)_n}.\notag
\end{align}
into \eqref{6psi6eq1aa}.
\end{proof}

Inserting the unit WP-Bailey pair,
\begin{align}\label{up}
\alpha_{n}(a,k)&=\frac{(q \sqrt{a}, -q
\sqrt{a},a,a/k;q)_n}{(\sqrt{a},-\sqrt{a},q,kq;q)_n}\left(\frac{k}{a}\right)^n,\\
\beta_n(a,k)&=\begin{cases} 1&n=0, \notag\\
0, &n>1,
\end{cases}
\end{align}
likewise leads to a four-term summation formula, with the same right side as \eqref{c1eq}.

\section{New $\theta$-Function Identities and new
transformations for basic hypergeometric series.}

We consider some other implications of \eqref{6psi6eq1a} and
\eqref{6psi6eq1aa}.

\begin{corollary}\label{cRS}
Let $|q|<1$ and $(\alpha_n(a,k,q),\beta_n(a,k,q))$ be a WP-Bailey
pair. Then {\allowdisplaybreaks
\begin{multline}\label{lambertrameq}
\sum_{n=1}^{\infty}\frac{(1-k
q^{2n})(q;q)_{n-1}\left(\frac{k^2}{q};q\right)_n(q^2;q^2)_n}
{(1-k)\left(\frac{q^2}{k},kq;q\right)_n\left(k^2;q^2\right)_n}
\left(\frac{-q^2}{k}\right)^n\beta_n(q,k,q)
 \\
\phantom{asdfas}-\sum_{n=1}^{\infty}\frac{(q^2;q^2)_{n-1}
\left(\frac{k^2}{q};q^2\right)_nq^{4n}\alpha_{2n}(q,k,q)}
{\left(\frac{q^4}{k^2},q^3;q^2\right)_nk^{2n}}
+\frac{\left(k^2-q\right) \left(k-q^2\right)}{(1-k) (1-q) (k^2-q^2)}
\\
=k\frac{(k^2q,q/k^2,q^2,q^2;q^2)_{\infty}}{(k^2,q^2/k^2,q,q;q^2)_{\infty}}
\left(1+\sum_{n=0}^{\infty}\frac{
\left(\frac{k^2}{q^2},\frac{1}{q};q^2\right)_{n+1}q^{4n+4}\alpha_{2n+1}(q,k,q)}
{\left(\frac{q^3}{k^2},q^2;q^2\right)_{n+1}k^{2n+2}}\right).
\end{multline}
}
\end{corollary}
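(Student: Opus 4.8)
The plan is to deduce \eqref{lambertrameq} from the $a=q$ case of Theorem~\ref{t1}, with \eqref{Ffeq} used to handle the reciprocal term. First one should see what setting $a=q$ does to the three pieces of the definition \eqref{6psi6eq1a} of $F(q,k,q)$: the first two become, verbatim, the two series on the left of \eqref{lambertrameq}, while the third becomes $\frac{(k^2/q,q^5/k^2,q^4,q^2;q^2)_\infty}{(k^2,q^4/k^2,q^3,q;q^2)_\infty}\sum_{n=0}^\infty\frac{(k^2,q;q^2)_n}{(q^5/k^2,q^4;q^2)_n}(q^2/k)^{2n+1}\alpha_{2n+1}(q,k,q)$. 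A short calculation with the Pochhammer shifts $(x;q^2)_{n+1}=(1-x)(xq^2;q^2)_n$, matching the coefficients of $\alpha_{2n+1}(q,k,q)$ term by term, shows this last quantity equals $-\,k\,\frac{(k^2q,q/k^2,q^2,q^2;q^2)_\infty}{(k^2,q^2/k^2,q,q;q^2)_\infty}\sum_{n=0}^\infty\frac{(k^2/q^2,1/q;q^2)_{n+1}q^{4n+4}\alpha_{2n+1}(q,k,q)}{(q^3/k^2,q^2;q^2)_{n+1}k^{2n+2}}$. Hence \eqref{lambertrameq} is equivalent to the closed-form evaluation
\[
F(q,k,q)=k\,\frac{(k^2q,q/k^2,q^2,q^2;q^2)_\infty}{(k^2,q^2/k^2,q,q;q^2)_\infty}-\frac{(k^2-q)(k-q^2)}{(1-k)(1-q)(k^2-q^2)},
\]
and it is this evaluation that remains to be proved.

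For it I would use \eqref{Ffeq}, which at $a=q$ reads $F(q,k,q)=\half\,f(q/k,k,-1,q)+f(q,k^2,q^2,q^2)$ and involves no degeneracy. The second summand is rational: in the Lambert-series form \eqref{fakzeqb} (with $a=q$) the four sums telescope pairwise to $\frac{q}{1-q}-\frac{k^2}{1-k^2}$. For the first summand, the Lambert-series form \eqref{fakzeqa} gives, after one re-indexing of the two sums in it, the elementary identity $f(q/k,k,-1,q)+f(k/q,1/k,-1,q)=\frac{2q/k}{1-q^2/k^2}-\frac{2k}{1-k^2}$; adding this to the $a=q$ case of \eqref{wpeq7a} (which supplies $f(q/k,k,-1,q)-f(k/q,1/k,-1,q)$) and dividing by $4$ gives
\[
\half\,f(q/k,k,-1,q)=\frac{(q/k-k)(1+q)}{(1-q^2/k^2)(1-k^2)}-\frac{q}{2k}\,\frac{(k^2/q,q^2/k^2,-q,-1;q)_\infty(q^2,q^2;q^2)_\infty}{(k^2,q^2/k^2,q^2/k^2,k^2;q^2)_\infty}.
\]
(Equivalently, one can substitute $a=q$ directly into \eqref{6psi6eq1aa}: the first product on its right drops out because of the factor $(q/a;q^2)_\infty\to(1;q^2)_\infty=0$, and $F(1/q,1/k,q)=\half\,f(k/q,1/k,-1,q)$ by \eqref{Ffeq} together with $f(1/q,1/k^2,1,q^2)=0$; this also sidesteps the fact that the series in \eqref{6psi6eq1a} need not converge term by term when evaluated at the pair $(1/q,1/k)$.)

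The argument is then completed by two verifications. The first is a rational-function identity, $\frac{(q/k-k)(1+q)}{(1-q^2/k^2)(1-k^2)}+\frac{q}{1-q}-\frac{k^2}{1-k^2}=-\frac{(k^2-q)(k-q^2)}{(1-k)(1-q)(k^2-q^2)}$, which is precisely the $a=q$ instance of the rational identity verified at the close of the proof of Theorem~\ref{t1}. The second, which is the only step with real content, is the $q$-product identity
\[
-\frac{q}{2k}\,\frac{(k^2/q,q^2/k^2,-q,-1;q)_\infty(q^2,q^2;q^2)_\infty}{(k^2,q^2/k^2,q^2/k^2,k^2;q^2)_\infty}=k\,\frac{(k^2q,q/k^2,q^2,q^2;q^2)_\infty}{(k^2,q^2/k^2,q,q;q^2)_\infty}.
\]
For this I would use $(-1;q)_\infty=2(-q;q)_\infty$ and $(-q;q)_\infty=1/(q;q^2)_\infty$ to remove the minus-sign factors, then split each remaining base-$q$ product using $(x;q)_\infty=(x;q^2)_\infty(xq;q^2)_\infty$; after cancelling the common factor $(q^2;q^2)_\infty^2/((k^2;q^2)_\infty(q^2/k^2;q^2)_\infty(q;q^2)_\infty^2)$ from both sides, the identity reduces to $-\frac{q}{k}(k^2/q;q^2)_\infty(q^3/k^2;q^2)_\infty=k(k^2q;q^2)_\infty(q/k^2;q^2)_\infty$, which follows at once from $(k^2/q;q^2)_\infty=(1-k^2/q)(k^2q;q^2)_\infty$, $(q^3/k^2;q^2)_\infty=(q/k^2;q^2)_\infty/(1-q/k^2)$ and $\frac{1-k^2/q}{1-q/k^2}=-\frac{k^2}{q}$. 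Putting the two evaluations together gives the displayed closed form for $F(q,k,q)$, and hence \eqref{lambertrameq}. I expect the $q$-product manipulation to be the main (if still routine) obstacle, with the term-by-term bookkeeping of the first paragraph a close second.
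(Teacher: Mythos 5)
Your proposal is correct, and it shares the paper's skeleton --- both arguments reduce \eqref{lambertrameq} to the closed-form evaluation $F(q,k,q)=k\,\frac{(k^2q,q/k^2,q^2,q^2;q^2)_\infty}{(k^2,q^2/k^2,q,q;q^2)_\infty}+\frac{(k^2-q)(q^2-k)}{(1-k)(1-q)(k^2-q^2)}$ via \eqref{Ffeq}, with the odd-$\alpha$ piece of \eqref{6psi6eq1a} massaged onto the right-hand side by the same routine Pochhammer bookkeeping --- but you produce the infinite product by a genuinely different key step. The paper writes $F(q,k,q)$ as Lambert series (as in \eqref{lambeq1}), reindexes, and then invokes a classical identity of Ramanujan (Berndt, Part III, Ch.\ 17, Entry 8.5) to fuse the two surviving Lambert series into the theta quotient; you instead stay inside the paper's own toolkit, symmetrizing $f(q/k,k,-1,q)$ against its reciprocal, evaluating the sum by the elementary reindexing identity $f(q/k,k,-1,q)+f(k/q,1/k,-1,q)=\frac{2q/k}{1-q^2/k^2}-\frac{2k}{1-k^2}$ and the difference by \eqref{wpeq7a} at $a=q$, and then converting the resulting product into $kM(k,q)$ by direct $q$-product manipulation (which I checked: your reduction to $-\frac{q}{k}(k^2/q;q^2)_\infty(q^3/k^2;q^2)_\infty=k(k^2q;q^2)_\infty(q/k^2;q^2)_\infty$ is right). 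The trade-off: the paper's route is shorter given the external reference, while yours is self-contained and in effect rederives the needed instance of Ramanujan's Lambert-series evaluation from \eqref{wpeq7a}, at the cost of the extra product algebra.

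Two small inaccuracies, neither of which harms the argument. First, the rational identity you need, $\frac{(q/k-k)(1+q)}{(1-q^2/k^2)(1-k^2)}+\frac{q}{1-q}-\frac{k^2}{1-k^2}=-\frac{(k^2-q)(k-q^2)}{(1-k)(1-q)(k^2-q^2)}$, is true but is not literally the $a=q$ instance of the identity displayed at the end of the proof of Theorem \ref{t1} (that identity has an extra term coming from \eqref{wpeq7b}, and as printed its last term appears to carry a typo, $\frac{k^2q}{a}$ where $\frac{aq}{k^2}$ is meant); since your identity is an immediate direct verification, just verify it rather than cite. Second, your parenthetical shortcut asserting $F(1/q,1/k,q)=\frac12 f(k/q,1/k,-1,q)$ is correct in substance (the naive substitution $a\to 1/q$, $k\to 1/k$ in \eqref{Ffeq} is legitimate at the level of the Lambert-series representations, and $f(\cdot,\cdot,1,\cdot)=0$), but note that it silently bypasses the correction terms the paper itself introduces when passing from $F(a,k,q)$ to $F(1/a,1/k,q)$; your main route does not rely on this, so nothing is at stake.
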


\begin{proof}
From \eqref{G'eq} and \eqref{Ffeq},
{\allowdisplaybreaks\begin{align}\label{lambeq1}
F(q,k,q)&=\sum_{n=1}^{\infty}\frac{k q^n}{1-k^2q^{2n}}
-\sum_{n=1}^{\infty}\frac{q^{n+1}/k}{1-q^{2n+2}/k^2}
+\frac{q}{1-q}-\frac{k^2}{1-k^2}\notag\\
&=\sum_{n=1}^{\infty}\frac{k q^n}{1-k^2q^{2n}}
-\sum_{n=1}^{\infty}\frac{q^{n}/qk}{1-q^{2n}/q^2k^2}\notag\\
&\phantom{sdasdasd}+
\frac{1/k}{1-1/k^2}+
\frac{q/k}{1-q^2/k^2}
+\frac{q}{1-q}-\frac{k^2}{1-k^2}\notag\\
&=\sum_{n=1}^{\infty}\frac{k q^n}{1-k^2q^{2n}}
-\sum_{n=1}^{\infty}\frac{q^{n}/qk}{1-q^{2n}/q^2k^2}
+\frac{\left(k^2-q\right) \left(q^2-k\right)}{(1-k) (1-q)
(k^2-q^2)}\notag\\
&=k\frac{(k^2q,q/k^2,q^2,q^2;q^2)_{\infty}}{(k^2,q^2/k^2,q,q;q^2)_{\infty}}
+ \frac{\left(k^2-q\right) \left(q^2-k\right)}{(1-k) (1-q)
(k^2-q^2)},
\end{align}}
where an identity of Ramanujan (\cite[Chapter. 17, page 116, Equation
(8.5)]{B91}) is used to combine the two Lambert series  to give
 the infinite product.

Upon using \eqref{6psi6eq1a} to substitute for $F(q,k,q)$ above, we
get the identity {\allowdisplaybreaks
\begin{multline}
\sum_{n=1}^{\infty}\frac{(1-k
q^{2n})(q;q)_{n-1}\left(\frac{k^2}{q};q\right)_n(q^2;q^2)_n}
{(1-k)\left(\frac{q^2}{k},kq;q\right)_n\left(k^2;q^2\right)_n}
\left(\frac{-q^2}{k}\right)^n\beta_n(q,k)\\
- \sum_{n=1}^{\infty}\frac{(q^2;q^2)_{n-1}
\left(\frac{k^2}{q};q^2\right)_n}
{\left(\frac{q^4}{k^2},q^3;q^2\right)_n}
\left(\frac{q^2}{k}\right)^{2n}\alpha_{2n}(q,k)\\
+\frac{\left(\frac{k^2}{q},\frac{q^5}{k^2},q^4,q^2;q^2\right)_{\infty}}
{\left(k^2,\frac{q^4}{k^2},q^3,q;q^2\right)_{\infty}}\sum_{n=0}^{\infty}\frac{
\left(k^2,q;q^2\right)_n} {\left(\frac{q^5}{k^2},q^4;q^2\right)_n}
\left(\frac{q^2}{k}\right)^{2n+1}\alpha_{2n+1}(q,k)\\
=k\frac{(k^2q,q/k^2,q^2,q^2;q^2)_{\infty}}{(k^2,q^2/k^2,q,q;q^2)_{\infty}}
+ \frac{\left(k^2-q\right) \left(q^2-k\right)}{(1-k) (1-q)
(k^2-q^2)}.
\end{multline}
}

 This last identity gives \eqref{lambertrameq}, after some simple
manipulations.
\end{proof}

Remark: We note in passing that if $R(q)$ denotes the left side of
\eqref{lambertrameq}, and $S(q)$ denotes the infinite series
following the infinite product on the right side of
\eqref{lambertrameq}, then
\[
\frac{(1-k^2)R(q)}{kS(q)}
\]
is invariant under the transformation $k \to 1/k$.

Let
\begin{equation}
M(k,q):=\frac{(k^2q,q/k^2,q^2,q^2;q^2)_{\infty}}{(k^2,q^2/k^2,q,q;q^2)_{\infty}},
\end{equation}
the infinite product at \eqref{lambertrameq} above. We also note
that
 many of the theta
functions investigated by Ramanujan and others are expressible in
terms of $M(k,q)$, so that inserting the WP-Bailey pair
\begin{align}\label{pr4}
\alpha_{n}(a,k)&=\begin{cases}0,&\text{if $n$ is
odd},\\
\frac{\left(q^2 \sqrt{a}, \,-q^2
\sqrt{a},\,a,\,a^2/k^2;\,q^2\right)_{n/2}}
{\left(\sqrt{a},\,-\sqrt{a},\,q^2,\,q^2k^2/a;\,q^2\right)_{n/2}}
\left(\frac{k}{a}\right)^n,&\text{if $n$ is even},
\end{cases}\\
\beta_n(a,k)&=\frac{\left(k,\,k\sqrt{q/a},\,-k\sqrt{q/a},\,a/k;\,q\right)_n}
{(\sqrt{aq},\,-\sqrt{aq},\,q k^2/a,\,q;\,q)_n}
\left(\frac{-k}{a}\right)^n, \notag
\end{align}
with $a=q$ and the appropriate choice $k$ (which we give below), in
\eqref{lambertrameq} provides representations of these theta
functions in terms of basic hypergeometric series.

For example, recall the function
\begin{equation}\label{aqeq}
a(q):=\sum_{m,n=-\infty}^{\infty}q^{m^2+mn+n^2}.
\end{equation}
This series was studied in \cite{BB91}, where it was shown that
\[
a^3(q)=b^3(q)+c^3(q),
\]where $b(q)=\sum_{m,n=-\infty}^{\infty}\omega^{m-n}q^{m^2+mn+n^2}$,
$\omega=exp(2\pi i/3)$, and
$c(q)=\sum_{m,n=-\infty}^{\infty}q^{(m+1/3)^2+(m+1/3)(n+1/3)+(n+1/3)^2}$.
The series $a(q)$ was also studied by Ramanujan, who showed
(\textbf{Entry 18.2.8} of Ramanujan's Lost Notebook - see \cite[page
402]{AB05}) that
\begin{equation}\label{corlameq2}
a(q)=1+6\sum_{n=1}^{\infty}\frac{q^{-2}q^{3n}}{1-q^{-2}q^{3n}}
-6\sum_{n=1}^{\infty}\frac{q^{-1}q^{3n}}{1-q^{-1}q^{3n}}.\notag
\end{equation}
In \cite[Equation (6.3), page 116]{B98} the author showed that
\begin{equation}\label{aq2eq}
a(q)-a(q^2)=6q\frac{(q,q^5,q^6,q^6;q^6)_{\infty}}{(q^2,q^4,q^3,q^3;q^6)_{\infty}}
=6q\frac{\psi^3(q^3)}{\psi(q)}.
\end{equation}

This result may be derived from  \eqref{Ffeq} by replacing $q$ with
$q^3$, $a$ with $1/q^3$ and $k$ with $1/q^2$, then inserting the
unit WP-Bailey pair \eqref{up} and finally using \eqref{corlameq2}
to combine the resulting Lambert series. Notice that the last
identity gives that $a(q)-a(q^2)=6q M(q,q^3)$, with the implications
given by \eqref{lambertrameq} noted above.

As a second example, consider the function $q \psi(q^2)\psi(q^6)$.
Ramanujan showed (see \textbf{Entry 3} (i), Chapter 19, page 223 of
\cite{B91}) that,
\begin{equation}\label{rqp2p6}
q\psi(q^2)\psi(q^6)=\sum_{n=1}^{\infty}\frac{q^{6n-5}}{1-q^{12n-10}}
-\sum_{n=1}^{\infty}\frac{q^{6n-1}}{1-q^{12n-2}}.
\end{equation}
With regard to \eqref{lambertrameq}, we note that $q
\psi(q^2)\psi(q^6)=q M(q,q^6)$.

Thirdly, recall the function
\begin{equation}\label{thetaeq}
\phi(q):=\sum_{n=-\infty}^{\infty}q^{n^2}=(-q,-q,q^2;q^2)_{\infty}.
\end{equation}
Ramanujan (\textbf{Entry 8 (i)} in chapter 17 of \cite{B91}) showed that
\[
\phi(q)^2=1+4 \sum_{n=1}^{\infty}\frac{q^{4n-3}}{1-q^{4n-3}}-4 \sum_{n=1}^{\infty}\frac{q^{4n-1}}{1-q^{4n-1}},
\]
and is easy to check that $\phi(q)^2=2 M(i,q)$, where $i^2=-1$.

\subsection{A variation of \eqref{lambertrameq}.}
It is not difficult to see that if we set $a = q^{2t+1}$, where $t$
is an integer, in \eqref{G'eq} and \eqref{Ffeq}, then the Lambert
series combine in essentially the same way as they did at
\eqref{lambeq1}, with a different finite sum of terms left over from
combining the Lambert series that essentially cancel each other,
(the particular sum depending on the choice of $t$). Corollary
\ref{cRS} follows from the choice $t=0$, but a similar result will
follow from other choices for $t$. If we let $t$ be a negative
integer, then inserting the usual WP-Bailey pairs in the resulting
identity will, in most cases, give trivial results (for most
WP-Bailey pairs, either the $\alpha_n$ or the $\beta_n$ contains a
$(a;q)_n$ factor, or some similar factor that will vanish for all
$n$ large enough when $a$ is a negative power of $q$). However,
there are two WP-Bailey pairs which give non-trivial results for the
choice $a=q^{-1}$, and we consider those results next. The proof of
the following corollary is virtually identical to that of Corollary
\ref{cRS2} (except, as mentioned, $a$ is set equal to $1/q$) and so
is omitted.

\begin{corollary}\label{cRS2}
Let $|q|<1$ and $(\alpha_n(a,k,q),\beta_n(a,k,q))$ be a WP-Bailey
pair. Define $\beta_n^{*}(1/q,k,q)=\lim_{a\to
1/q}(1-aq)\beta_n(a,k,q)$, if the limit exists. Then, assuming all
series converge, {\allowdisplaybreaks
\begin{multline}\label{lambertrameq2}
\sum_{n=1}^{\infty}\frac{(1-k q^{2n})(q;q)_{n-1}
\left(k^2q;q\right)_n(q^2;q^2)_{n-1}}
{(1-k)\left(\frac{1}{k},kq;q\right)_n \left(k^2q^2;q^2\right)_n}
\left(\frac{-1}{k}\right)^n\beta_n^{*}(1/q,k,q)
 \\
\phantom{asdfas} -\sum_{n=1}^{\infty} \frac{(q^2;q^2)_{n-1}
\left(k^2q;q^2\right)_n\alpha_{2n}(1/q,k,q)}
{\left(\frac{1}{k^2},q;q^2\right)_nk^{2n}}
\\
=k\frac{(k^2q,q/k^2,q^2,q^2;q^2)_{\infty}}{(k^2,q^2/k^2,q,q;q^2)_{\infty}}
\left(1+\sum_{n=0}^{\infty}\frac{ \left(k^2q^2,q;q^2\right)_{n}
\alpha_{2n+1}(1/q,k,q)}
{\left(\frac{q}{k^2},q^2;q^2\right)_{n}k^{2n}}\right).
\end{multline}
}
\end{corollary}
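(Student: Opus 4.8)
The plan is to run the proof of Corollary~\ref{cRS} verbatim, but with $a$ set equal to $1/q$ rather than $q$. As there, I would evaluate $F(1/q,k,q)$ in two ways: from the Lambert-series representation furnished by \eqref{Ffeq} together with \eqref{G'eq}, and from the defining series \eqref{6psi6eq1a}. Equating the two expressions produces the stated identity.

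For the Lambert-series side, set $a=1/q$ in \eqref{G'eq}. Since $aq=1$, four of the six Lambert series cancel in pairs: the series $-\sum aq^{2n}/(1-aq^{2n})$ becomes $-\sum q^{2n-1}/(1-q^{2n-1})$ and cancels $\sum (q^{2n}/q)/(1-q^{2n}/q)$, while $-\sum (k^2/aq)q^{2n}/(1-(k^2/aq)q^{2n})$ becomes $-\sum k^2q^{2n}/(1-k^2q^{2n})$ and cancels $\sum k^2q^{2n}/(1-k^2q^{2n})$. What survives is $\sum_{n\ge1}kq^n/(1-k^2q^{2n})-\sum_{n\ge1}(q^n/k)/(1-q^{2n}/k^2)$, plus the one rational term $-(1/k)/(1-1/k^2)$ left over from the index shift in the second surviving series. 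Rewriting $\sum_{n\ge1}(q^n/k)/(1-q^{2n}/k^2)$ as $\sum_{n\ge1}(q^n/(qk))/(1-q^{2n}/(q^2k^2))-(1/k)/(1-1/k^2)$ brings the surviving pair of Lambert series into exactly the form to which the identity of Ramanujan invoked at \eqref{lambeq1} applies; that identity collapses the pair to $kM(k,q)$, and the two stray rational terms cancel. Hence $F(1/q,k,q)=kM(k,q)$ with no leftover rational term, which is precisely why \eqref{lambertrameq2} carries no rational summand on its left-hand side, unlike \eqref{lambertrameq}.

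For the WP-Bailey-pair side, the only genuine subtlety is the first sum of \eqref{6psi6eq1a}. At $a=1/q$ the factor $(qa;q^2)_n=(1;q^2)_n$ vanishes for every $n\ge1$, while a generic WP-Bailey pair has a simple pole in $\beta_n(a,k,q)$ at $a=1/q$ coming from the denominators $(aq;q)_{n+j}$ in \eqref{WPpair}. Writing $(qa;q^2)_n=(1-qa)(q^3a;q^2)_{n-1}$ and using the regularization $\beta_n^{*}(1/q,k,q)=\lim_{a\to1/q}(1-aq)\beta_n(a,k,q)$ supplied in the statement, the product $(qa;q^2)_n\,\beta_n(a,k,q)$ tends to $(q^2;q^2)_{n-1}\,\beta_n^{*}(1/q,k,q)$, so the first sum of \eqref{6psi6eq1a} passes in the limit to the first sum of \eqref{lambertrameq2}. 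The second sum of \eqref{6psi6eq1a} specializes by straight substitution, with $(q^2a^2/k^2;q^2)_n\to(1/k^2;q^2)_n$ and $(q^2a;q^2)_n\to(q;q^2)_n$, yielding the second sum of \eqref{lambertrameq2}. In the third sum, substitution turns the infinite-product prefactor into $(k^2q,q/k^2,q^2,q^2;q^2)_\infty/(k^2q^2,1/k^2,q,q;q^2)_\infty$; using $(k^2;q^2)_\infty=(1-k^2)(k^2q^2;q^2)_\infty$ and $(1/k^2;q^2)_\infty=(1-1/k^2)(q^2/k^2;q^2)_\infty$ this equals $-k^2M(k,q)$, and after absorbing the factor $(qa/k)^{2n+1}=k^{-2n-1}$ the third sum becomes $-kM(k,q)\sum_{n\ge0}\frac{(k^2q^2,q;q^2)_n\,\alpha_{2n+1}(1/q,k,q)}{(q/k^2,q^2;q^2)_n\,k^{2n}}$.

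Equating the two evaluations of $F(1/q,k,q)$ now reads: $kM(k,q)$ equals the first sum of \eqref{lambertrameq2}, minus the second sum of \eqref{lambertrameq2}, minus $kM(k,q)\sum_{n\ge0}(\cdots)$; transposing that last term to the right-hand side and factoring out $kM(k,q)$ reproduces \eqref{lambertrameq2}, the constant $1$ inside the parentheses being exactly the transposed $kM(k,q)$. I expect the main obstacle to be not any individual computation but the analytic bookkeeping: one must know that $\beta_n^{*}(1/q,k,q)$ exists and, the more delicate point, that the limit $a\to1/q$ may be carried out term-by-term inside the only conditionally convergent series of \eqref{6psi6eq1a}; beyond that one simply needs the usual care in tracking the Lambert-series cancellations and the sign in the product collapse. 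All of this is routine but tedious, which is no doubt why the authors omit it.
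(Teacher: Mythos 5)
Your proposal is correct and follows exactly the route the paper intends: it omits the proof as ``virtually identical to that of Corollary~\ref{cRS}, except $a$ is set equal to $1/q$,'' which is precisely what you carry out, specializing \eqref{Ffeq}/\eqref{G'eq} at $a=1/q$ so the Lambert series collapse to $kM(k,q)$ with the stray rational terms cancelling, and specializing \eqref{6psi6eq1a} with the regularization $\beta_n^{*}$ to absorb the $(qa;q^2)_n$ zero against the pole of $\beta_n$. Your detailed checks (the factor $(qa;q^2)_n=(1-qa)(q^3a;q^2)_{n-1}$, the prefactor equalling $-k^2M(k,q)$, and the absence of a rational summand) all agree with what the paper's omitted computation would give.
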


Inserting the WP-Bailey pair
\begin{align}\label{mz01}
\alpha_n^{(1)}(a,k)&=\frac{(q a^2/k^2;q)_n}{(q,q)_n}\left(
\frac{k}{a}\right)^n,\\
\beta_n^{(1)}(a,k)&=\frac{(q
a/k,k;q)_n}{(k^2/a,q,q)_n}\frac{(k^2/a;q)_{2n}}{(a q,q)_{2n}},\notag
\end{align}
leads, for $|k|>1$, to the identity
 {\allowdisplaybreaks
\begin{multline}\label{lambertrameq21}
\sum_{n=1}^{\infty}\frac{(1-k q^{2n}) \left(k^2q;q\right)_n}
{(1-q^n)(1-kq^n) \left(q;q^2\right)_n} \left(\frac{-1}{k}\right)^n
% \\\phantom{asdfas}
 -\sum_{n=1}^{\infty} \frac{
\left(k^2q,\frac{1}{k^2q};q^2\right)_nq^{2n}}
{(1-q^{2n})\left(q,q;q^2\right)_n}
\\
=k\frac{(k^2q,q/k^2,q^2,q^2;q^2)_{\infty}}{(k^2,q^2/k^2,q,q;q^2)_{\infty}}
\left(1+\sum_{n=0}^{\infty}\frac{\left(1-\frac{1}{k^2q} \right)
\left(k^2q^2,\frac{1}{k^2};q^2\right)_nq^{2n+1}}
{(1-q^{2n+1})\left(q^2,q^2;q^2\right)_n}\right).
\end{multline}
}

Similarly, inserting the pair at \eqref{pr4} gives the identity
 {\allowdisplaybreaks
\begin{multline}\label{lambertrameq22}
\sum_{n=1}^{\infty} \frac{(1-k q^{2n})
(q;q)_{n-1}\left(k^2q,k,\frac{1}{kq};q\right)_n q^n} {
\left(q,k^2q^2,\frac{1}{k},kq;q^2\right)_n}
\\
%\phantom{asdfas}
 -\sum_{n=1}^{\infty} \frac{(1-q^{4n-1})(q^2;q^2)_{n-1}
\left(k^2q,\frac{1}{k^2q^2},\frac{1}{q};q^2\right)_nq^{2n}}
{(1-q^{-1})\left(k^2q^3,\frac{1}{k^2},q,q^2;q^2\right)_n}
\\
=k\frac{(k^2q,q/k^2,q^2,q^2;q^2)_{\infty}}{(k^2,q^2/k^2,q,q;q^2)_{\infty}}.
\end{multline}
}

%\vspace{200pt}

The next result also follows from \eqref{6psi6eq1a}, and expresses a
general sum involving an arbitrary WP-Bailey pair in terms of
Lambert series.

\begin{corollary}
Let $(\alpha_n(a,k),\beta_n(a,k))$ be a WP-Bailey pair. Then
{\allowdisplaybreaks\begin{multline}\label{GLambeq}
\sum_{n=1}^{\infty}\frac{(1-k q^{2n})(q,q;q)_{n-1}(qk^2;q^2)_n}
{(1-k)\left(kq,kq;q\right)_n\left(q;q^2\right)_n}
\left(-qk\right)^n\beta_n(k^2,k)\\
-
\sum_{n=1}^{\infty}\frac{(q^2,q^2;q^2)_{n-1}}
{\left(q^2k^2,q^2k^2;q^2\right)_n}
\left(qk\right)^{2n}\alpha_{2n}(k^2,k)\\
+\frac{\left(q^3k^2,q^3k^2,q^2,q^2;q^2\right)_{\infty}}
{\left(q^2k^2,q^2k^2,q,q;q^2\right)_{\infty}}\sum_{n=0}^{\infty}\frac{
\left(q,q;q^2\right)_n}
{\left(q^3k^2,q^3k^2;q^2\right)_n}
\left(qk\right)^{2n+1}\alpha_{2n+1}(k^2,k)\\
=\sum_{n=1}^{\infty}\frac{nq^n}{1-q^{2n}}
+\sum_{n=1}^{\infty}\frac{nk^{2n}q^{2n}}{1-q^{2n}}
-\sum_{n=1}^{\infty}\frac{nk^nq^n}{1-q^{n}}.
\end{multline}
}
\end{corollary}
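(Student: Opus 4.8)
The plan is to prove \eqref{GLambeq} by differentiating the main identity \eqref{6psi6eq1a} at the degenerate value $a=k^{2}$. The starting point is that $F(a,k,q)$, defined by the three sums in \eqref{6psi6eq1a}, is in fact independent of the WP-Bailey pair: by \eqref{Ffeq} it equals $\frac{1}{2}f(a/k,k,-1,q)+f(a,k^{2},aq,q^{2})$, a pair-free combination of Lambert series, and by \eqref{G'eq} this combination equals the six-term Lambert sum displayed on the first line there. When $a=k^{2}$ both descriptions of $F$ vanish: in \eqref{6psi6eq1a} the factors $(k^{2}/a;q)_{n}$, $(k^{2}/a;q^{2})_{n}$ (for $n\ge 1$) and $(k^{2}/a;q^{2})_{\infty}$ all become $0$, while in the six-term sum the six series cancel in pairs. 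Accordingly, \eqref{GLambeq} will be obtained by computing $k^{2}\,\frac{d}{da}F(a,k,q)\big|_{a=k^{2}}$ in two ways and equating.

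For the left side of \eqref{GLambeq} I would differentiate the three sums of \eqref{6psi6eq1a} term by term in $a$. In each summand of each sum exactly one $q$-product has a simple zero at $a=k^{2}$ --- namely $(k^{2}/a;q)_{n}$ in the first sum, $(k^{2}/a;q^{2})_{n}$ in the second, and $(k^{2}/a;q^{2})_{\infty}$ in the prefactor of the third --- so by the product rule the only contribution surviving at $a=k^{2}$ is the one in which that factor is differentiated; in particular all terms carrying $\partial_{a}\beta_{n}$ or $\partial_{a}\alpha_{n}$ drop out, leaving $\beta_{n}(k^{2},k)$ and $\alpha_{n}(k^{2},k)$. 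Since the derivatives of $(k^{2}/a;q)_{n}$, $(k^{2}/a;q^{2})_{n}$, $(k^{2}/a;q^{2})_{\infty}$ at $a=k^{2}$ equal $\frac{1}{k^{2}}(q;q)_{n-1}$, $\frac{1}{k^{2}}(q^{2};q^{2})_{n-1}$, $\frac{1}{k^{2}}(q^{2};q^{2})_{\infty}$ respectively, and all the remaining factors are evaluated at $a=k^{2}$, a direct check shows $k^{2}\,\frac{d}{da}F(a,k,q)\big|_{a=k^{2}}$ is precisely the left side of \eqref{GLambeq}.

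For the right side I would differentiate the six-term Lambert form of $F$ (the first line of \eqref{G'eq}) term by term in $a$ --- only four of the six series depend on $a$, and the differentiation uses $\frac{d}{du}\frac{u}{1-u}=(1-u)^{-2}$ --- and then set $a=k^{2}$, obtaining
\[
k^{2}\left.\frac{d}{da}F(a,k,q)\right|_{a=k^{2}}
=-k\sum_{n=1}^{\infty}\frac{q^{n}}{(1-kq^{n})^{2}}
+k^{2}\sum_{n=1}^{\infty}\frac{q^{2n}}{(1-k^{2}q^{2n})^{2}}
+\sum_{n=1}^{\infty}\frac{q^{2n-1}}{(1-q^{2n-1})^{2}}.
\]
It then remains to recognise the right side of this display as the right side of \eqref{GLambeq}. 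This follows from the elementary expansion $\sum_{n\ge1}\frac{xq^{n}}{1-xq^{n}}=\sum_{n\ge1}\sum_{m\ge1}x^{m}q^{nm}=\sum_{j\ge1}\frac{x^{j}q^{j}}{1-q^{j}}$: applying $x\,\frac{d}{dx}$ to the two ends gives $x\sum_{n}\frac{q^{n}}{(1-xq^{n})^{2}}=\sum_{j}\frac{jx^{j}q^{j}}{1-q^{j}}$, whence $k\sum_{n}\frac{q^{n}}{(1-kq^{n})^{2}}=\sum_{n}\frac{nk^{n}q^{n}}{1-q^{n}}$ and (taking $x=k^{2}$ with $q$ replaced by $q^{2}$) $k^{2}\sum_{n}\frac{q^{2n}}{(1-k^{2}q^{2n})^{2}}=\sum_{n}\frac{nk^{2n}q^{2n}}{1-q^{2n}}$; while $\sum_{n}\frac{q^{2n-1}}{(1-q^{2n-1})^{2}}=\sum_{n}\sum_{m\ge1}mq^{(2n-1)m}=\sum_{n}\frac{nq^{n}}{1-q^{2n}}$ on comparing coefficients of $q^{N}$ (both sides equal $\sum_{N}q^{N}\sum_{d\mid N,\ d\text{ odd}}(N/d)$). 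Substituting these three identities turns the display into the right side of \eqref{GLambeq}, and equating the two computations of $k^{2}\,\frac{d}{da}F(a,k,q)\big|_{a=k^{2}}$ proves the corollary.

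The only genuine issue is justifying the term-by-term differentiation of the infinite sums and of the infinite products, and the evaluation of the differentiated series at $a=k^{2}$; under the stated hypotheses this is routine, the series converging uniformly on compact subsets where the denominators stay bounded away from $0$ and the products being locally uniformly convergent there. I expect the most laborious --- though purely mechanical --- step to be the left-side computation: tracking which factor of each of the three sums supplies the simple zero and checking that the surviving coefficients reproduce \eqref{GLambeq} exactly.
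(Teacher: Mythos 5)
Your proposal is correct and follows essentially the paper's own route: the paper likewise notes $F(k^2,k,q)=0$, identifies the left side of \eqref{GLambeq} as $k^2\,\partial_a F(a,k,q)\big|_{a=k^2}$ (via $G(k,q)=\lim_{a\to k^2}F(a,k,q)/(1-k^2/a)=k^2M'(k^2)$), and evaluates that derivative from the Lambert-series representation \eqref{Ffeq}/\eqref{G'eq} using $\sum_{n\ge1}\frac{xq^n}{(1-xq^n)^2}=\sum_{n\ge1}\frac{nx^nq^n}{1-q^n}$. Your two-way computation of the derivative, including the net coefficient $k^2$ on $\sum q^{2n}/(1-k^2q^{2n})^2$ and the odd-divisor identification of $\sum q^{2n-1}/(1-q^{2n-1})^2$ with $\sum nq^n/(1-q^{2n})$, checks out.
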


\begin{proof}
Define
{\allowdisplaybreaks
\begin{multline}\label{6psi6eq1b}
G(k,q):=\sum_{n=1}^{\infty}\frac{(1-k q^{2n})(q,q;q)_{n-1}(qk^2;q^2)_n}
{(1-k)\left(kq,kq;q\right)_n\left(q;q^2\right)_n}
\left(-qk\right)^n\beta_n(k^2,k)\\
-
\sum_{n=1}^{\infty}\frac{(q^2,q^2;q^2)_{n-1}}
{\left(q^2k^2,q^2k^2;q^2\right)_n}
\left(qk\right)^{2n}\alpha_{2n}(k^2,k)\\
+\frac{\left(q^3k^2,q^3k^2,q^2,q^2;q^2\right)_{\infty}}
{\left(q^2k^2,q^2k^2,q,q;q^2\right)_{\infty}}\sum_{n=0}^{\infty}\frac{
\left(q,q;q^2\right)_n}
{\left(q^3k^2,q^3k^2;q^2\right)_n}
\left(qk\right)^{2n+1}\alpha_{2n+1}(k^2,k).
\end{multline}
}
From \eqref{6psi6eq1a}, it can be seen that $F(k^2,k,q)=0$ and that
\begin{equation}\label{FGeq}
G(k,q)=\lim_{a\to k^2}\frac{F(a,k,q)}{1-k^2/a} = \lim_{a\to k^2}\frac{a(F(a,k,q)-F(k^2,k,q))}{a-k^2}= k^2 M'(k^2),
\end{equation}
where $M(a):=F(a,k,q)$. The result follows from the representation of $M(a)$ as a  sum of Lambert series on the right side of \eqref{Ffeq}, after some simple algebraic manipulations, and after using the identity \[\sum_{n=1}^{\infty} \frac{x q^n}{(1-x q^n)^2}
 = \sum_{n=1}^{\infty}\frac{ n x^n q^n}{1- q^n}\]
a number of times.
\end{proof}
The identity at \eqref{GLambeq} extends a result by the first author in  a previous paper  \cite[Equation (7.3)]{McL09c}, where the identity which follows from \eqref{GLambeq} upon inserting the trivial WP-Bailey pair \eqref{tp} was proven.

Upon replacing $q$ with $q^2$ and $k$ with $1/q$ in \eqref{GLambeq}
and inserting the unit WP-Bailey pair \eqref{up}, we derive
Ramanujan's identity (\textbf{Example} (iii) on page 139 of
\cite{B91})
\begin{equation}\label{rampsieq}
q \psi^4(q^2)=\sum_{k=0}^{\infty}\frac{(2k+1)q^{2k+1}}{1-q^{4k+2}}.
\end{equation}
where $\psi(q)$ is defined at \eqref{psieq}. The same substitutions
in \eqref{GLambeq} followed by the insertion of the trivial
WP-Bailey pair \eqref{tp} gives the identity
(\cite[Corollary14]{McL09c})
\begin{equation}\label{waceq2}
\sum_{n=0}^{\infty}\frac{(1-q^{4n+3})(q^2;q^2)_{n}(q^4;q^4)_{n}(-q)^n}
{(1-q^{2n+1})(q^2;q^2)_{n+1}(q^2;q^4)_{n+1}}=\psi^4(q^2).
\end{equation}
It is possible to derive a more general identity involving the function $q \psi^4(q^2)$.

\begin{corollary}
Let $(\alpha_n(a,k,q),\beta_n(a,k,q))$ be a WP-Bailey pair. Then
{\allowdisplaybreaks
\begin{multline}\label{GLambeqmin1}
\sum_{n=1}^{\infty}\frac{1+ q^{2n}}{2}\frac{(q,q;q)_{n-1}q^n}
{\left(-q,-q;q\right)_n}
\beta_n(1,-1,q)\\
-\sum_{n=1}^{\infty}\frac{1+ q^{2n}}{2}\frac{(-q,-q;-q)_{n-1}\left(-q\right)^n}
{\left(q,q;-q\right)_n}
\beta_n(1,-1,-q)\\-\sum_{n=1}^{\infty}\frac{q^n}{(1-q^n)^2}\,\alpha_n(1,-1,q)
+\sum_{n=1}^{\infty}\frac{(-q)^n}{(1-(-q)^n)^2}\,\alpha_n(1,-1,-q)\\
=4q \psi^4(q^2).
\end{multline}
}
{\allowdisplaybreaks
\begin{multline}\label{GLambeqmin2}
\sum_{n=1}^{\infty}\frac{1+ q^{2n}}{1-q^{2n}}\frac{(q;q)_{n-1}q^{n(n+1)/2}}
{\left(-q;q\right)_n}
-\sum_{n=1}^{\infty}\frac{1+ q^{2n}}{1-q^{2n}}\frac{(-q;-q)_{n-1}(-q)^{n(n+1)/2}}
{\left(q;-q\right)_n}
+2q\times \\\sum_{n=1}^{\infty}q^{8 n^2-4 n}
%\\\times
\left(\frac{\left(q^{8 n-2}+3\right)
   q^{6 n-2}}{\left(1-q^{8 n-2}\right)^2}-\frac{\left(q^{4 n-2}+1\right) q^{-2
   n}}{\left(1-q^{4 n-2}\right)^2}+\frac{\left(3 q^{8
   n-6}+1\right) q^{2-6 n}}{\left(1-q^{8 n-6}\right)^2}\right) \\
=4q \psi^4(q^2).
\end{multline}
}
\end{corollary}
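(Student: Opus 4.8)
The strategy is to read off both identities from the general Lambert-series transformation \eqref{GLambeq}. For \eqref{GLambeqmin1} I would put $k=-1$ (so $k^{2}=1$) in \eqref{GLambeq}. This is exactly the value at which the function $G$ of \eqref{6psi6eq1b} collapses: in the first series $(qk^{2};q^{2})_{n}$ cancels the $(q;q^{2})_{n}$ in the denominator and $(1-kq^{2n})/(1-k)=(1+q^{2n})/2$; in the second series $(q^{2}k^{2};q^{2})_{n}=(q^{2};q^{2})_{n}$, so it becomes $-\sum_{n\ge1}q^{2n}(1-q^{2n})^{-2}\alpha_{2n}(1,-1,q)$; and in the third series the infinite-product prefactor reduces to $(1-q)^{-2}$ while $(q,q;q^{2})_{n}/(q^{3}k^{2},q^{3}k^{2};q^{2})_{n}=(1-q)^{2}(1-q^{2n+1})^{-2}$, so it becomes $-\sum_{n\ge0}q^{2n+1}(1-q^{2n+1})^{-2}\alpha_{2n+1}(1,-1,q)$. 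These two $\alpha$-series merge into $-\sum_{m\ge1}q^{m}(1-q^{m})^{-2}\alpha_{m}(1,-1,q)$, and \eqref{GLambeq} at $k=-1$ becomes
\begin{multline*}
\sum_{n=1}^{\infty}\frac{1+q^{2n}}{2}\frac{(q,q;q)_{n-1}q^{n}}{(-q,-q;q)_{n}}\beta_{n}(1,-1,q)-\sum_{m=1}^{\infty}\frac{q^{m}}{(1-q^{m})^{2}}\alpha_{m}(1,-1,q)\\
=\sum_{n\ge1}\frac{nq^{n}}{1-q^{2n}}+\sum_{n\ge1}\frac{nq^{2n}}{1-q^{2n}}-\sum_{n\ge1}\frac{n(-q)^{n}}{1-q^{n}}.
\end{multline*}

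Next I would replace $q$ by $-q$ in this identity (still with $k=-1$) and subtract the two. On the left, the four terms of \eqref{GLambeqmin1} appear precisely as $G(-1,q)-G(-1,-q)$, once one notes that $q\mapsto-q$ turns $(q,q;q)_{n-1}$, $(-q,-q;q)_{n}$ and $(1-q^{m})^{2}$ into $(-q,-q;-q)_{n-1}$, $(q,q;-q)_{n}$ and $(1-(-q)^{m})^{2}$. On the right, the $\sum nq^{2n}/(1-q^{2n})$ terms cancel, the even-index parts of the remaining series cancel in pairs, and the odd-index parts combine via $\tfrac{1}{1-x}+\tfrac{1}{1+x}=\tfrac{2}{1-x^{2}}$ to give $4\sum_{k\ge0}(2k+1)q^{2k+1}/(1-q^{4k+2})$. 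Ramanujan's identity \eqref{rampsieq} turns this into $4q\,\psi^{4}(q^{2})$, which is \eqref{GLambeqmin1}.

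For \eqref{GLambeqmin2} I would insert into \eqref{GLambeqmin1} the WP-Bailey pair whose $\beta_{n}$ specializes at $a=1$, $k=-1$ to $\beta_{n}(1,-1,q)=(-1;q)_{n}q^{n(n-1)/2}/(q;q)_{n}$ (such a pair can be manufactured from one of the known WP-Bailey chains). Using $(-1;q)_{n}=2(-q;q)_{n-1}$, $(-q,-q;q)_{n}=(1+q^{n})^{2}(-q;q)_{n-1}^{2}$ and $(q;q)_{n}=(1-q^{n})(q;q)_{n-1}$, the first term of \eqref{GLambeqmin1} telescopes to $\sum_{n\ge1}\frac{1+q^{2n}}{1-q^{2n}}\frac{(q;q)_{n-1}q^{n(n+1)/2}}{(-q;q)_{n}}$ (note $n+\binom{n}{2}=n(n+1)/2$), and the second term is its $q\mapsto-q$ image. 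Then substituting the explicit $\alpha_{2n}(1,-1,q)$ and $\alpha_{2n+1}(1,-1,q)$ of the pair --- which carry a $q$-quadratic (theta) factor, so that the even/odd relabelling and the $q\mapsto-q$ pairing produce exponents of the shape $8n^{2}-4n$ --- into $-\sum q^{m}(1-q^{m})^{-2}\alpha_{m}(1,-1,q)+\sum(-q)^{m}(1-(-q)^{m})^{-2}\alpha_{m}(1,-1,-q)$ and regrouping yields the displayed triple sum $2q\sum_{n\ge1}q^{8n^{2}-4n}(\cdots)$; the right-hand side is again $4q\,\psi^{4}(q^{2})$ by the previous paragraph.

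The routine part of all this is the Pochhammer bookkeeping and the parity-based splitting of the Lambert series; the genuine obstacle is the last step, namely pinning down the pair with the prescribed $\beta_{n}(1,-1,q)$ and then carrying its $\alpha$-contribution through to the explicit closed form $2q\sum_{n}q^{8n^{2}-4n}(\cdots)$. This requires care with the $0/0$ limits that arise when $k$ is set to $-1$ in \eqref{up}-type formulas and with the interleaving of the $\alpha_{2n}$ and $\alpha_{2n+1}$ sums; none of it is deep, but it is where all the computation lies.
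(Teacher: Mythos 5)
Your derivation of \eqref{GLambeqmin1} is correct and is essentially the paper's own argument: the left side is $G(-1,q)-G(-1,-q)$ (the specialization $k=-1$ of \eqref{6psi6eq1b} collapses exactly as you say, with the two $\alpha$-sums merging over all indices), the Lambert series from \eqref{GLambeq} combine to $4\sum_{n\ge 1}(2n-1)q^{2n-1}/(1-q^{4n-2})$, and \eqref{rampsieq} converts this to $4q\,\psi^4(q^2)$. The paper condenses $G(-1,q)$ into a single odd-index Lambert series before subtracting, but that is the same computation. Your $\beta$-side simplification for \eqref{GLambeqmin2} (using $(-1;q)_n=2(-q;q)_{n-1}$ and $n+n(n-1)/2=n(n+1)/2$) is also correct.

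For \eqref{GLambeqmin2} itself, however, there is a genuine gap. You correctly see that one must insert a WP-Bailey pair whose $\beta_n(1,-1,q)=(-1;q)_nq^{n(n-1)/2}/(q;q)_n$, but you never exhibit such a pair --- ``can be manufactured from one of the known WP-Bailey chains'' is an assertion, not a construction --- and in particular you never write down $\alpha_n(1,-1,q)$, so your claim that its ``$q$-quadratic factor'' regroups into $2q\sum_{n\ge1}q^{8n^2-4n}(\cdots)$ is unsubstantiated. The paper resolves exactly this point: it takes Singh's pair \cite{S94}, sets $a=1$, $k=-1$ and lets $\rho_1,\rho_2\to\infty$ (a limit needing care because of the vanishing $(1;q)_n$-type factors, which is where your ``$0/0$'' worry actually lives --- it comes from $a=1$, not $k=-1$), obtaining $\alpha_n(1,-1,q)=(1+q^n)(-1)^nq^{n(n-1)/2}$ alongside the stated $\beta_n$. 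It then substitutes this $\alpha_n$ (and its $q\mapsto -q$ image) into the last two sums of \eqref{GLambeqmin1}, splits the index into the residue classes $4n$, $4n-1$, $4n-2$, $4n-3$, combines the series pairwise, and merges the three surviving series into the displayed bracket with exponents $8n^2-4n$. Since you yourself identify this as ``where all the computation lies,'' leaving it undone means the second identity is not proved; both the explicit pair and the four-way splitting and recombination must actually be carried out.
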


\begin{proof}
From \eqref{6psi6eq1b}, the left side of \eqref{GLambeqmin1} is $G(-1,q)-G(-1,-q)$. On the other hand, from \eqref{GLambeq},
\begin{align*}
G(-1,q)&=\sum_{n=1}^{\infty}\frac{nq^n}{1-q^{2n}}
+\sum_{n=1}^{\infty}\frac{nq^{2n}}{1-q^{2n}}
-\sum_{n=1}^{\infty}\frac{n(-1)^nq^n}{1-q^{n}}\\
&=\sum_{n=1}^{\infty}\frac{nq^n}{1-q^{n}}
-\sum_{n=1}^{\infty}\frac{n(-1)^nq^n}{1-q^{n}}\\
&=2\sum_{n=1}^{\infty}\frac{(2n-1)q^{2n-1}}{1-q^{2n-1}}\\
&=2\sum_{n=1}^{\infty}\frac{(2n-1)q^{2n-1}}{1-q^{4n-2}}
+2\sum_{n=1}^{\infty}\frac{(2n-1)q^{4n-2}}{1-q^{4n-2}}.
\end{align*}
Thus
\[
G(-1,q)-G(-1,-q)=4\sum_{n=1}^{\infty}\frac{(2n-1)q^{2n-1}}{1-q^{4n-2}},
\]
and \eqref{GLambeqmin1} follows from \eqref{rampsieq}.

For \eqref{GLambeqmin2}, we start with Singh's WP-Bailey pair \cite{S94},
{\allowdisplaybreaks
\begin{align*}
%\label{singhpr}
\alpha_{n}(a,k,q)&=\frac{(q \sqrt{a}, -q
\sqrt{a},a,\rho_1,\rho_2,a^2q/k\rho_1\rho_2;q)_n}
{(\sqrt{a},-\sqrt{a},q,a q/\rho_1,a q/\rho_2,k\rho_1\rho_2/a;q)_n}\left(\frac{k}{a}\right)^n,\\
\beta_n(a,k,q)&=\frac{(k \rho_1/a, k\rho_2/a, k,
aq/\rho_1\rho_2;q)_n}{(a q/\rho_1, a q/\rho_2, k \rho_1
\rho_2/a,q;q)_n}, \notag
\end{align*}}
set $a=1$,$k=-1$ and let $\rho_1, \rho_2 \to \infty$ to get the pair
{\allowdisplaybreaks
\begin{align*}
%\label{singhpr}
\alpha_{n}(1,-1,q)&=(1+q^n)(-1)^nq^{n(n-1)/2},\\
\beta_n(1,-1,q)&=\frac{(-1;q)_nq^{n(n-1)/2}}{(q;q)_n}. \notag
\end{align*}}
The first two series in \eqref{GLambeqmin2} come directly from inserting the expressions for $\beta_n(1,-1,q)$ and $\beta_n(1,-1,-q)$ in the first two series in
\eqref{GLambeqmin1}. The third series in \eqref{GLambeqmin2} comes inserting the expressions for $\alpha_n(1,-1,q)$ and $\alpha_n(1,-1,-q)$ in the last two series in  \eqref{GLambeqmin1}, then replacing  $n$, in turn, with $4n$, $4n-1$, $4n-2$ and $4n-3$, and then combining each pair of series into a single series, and finally combining the three surviving series together into one series.
\end{proof}

Remark: It is not difficult to see that a similar consideration of
\[
\lim_{a\to k^2}\frac{F(a,k,q)-F(1/a,1/k,k)}{1-k^2/a}
\]
at \eqref{6psi6eq1aa} gives the following result.
\begin{corollary}
Let $(\alpha_n(a,k),\beta_n(a,k))$ be a WP-Bailey pair and let $G(k,q)$ be as defined at
\eqref{6psi6eq1b}. Then
\begin{multline}\label{Geq2}
G(k,q)+G(1/k,q)\\
=\sum_{n=1}^{\infty}\frac{2nq^n}{1-q^{2n}}
+\sum_{n=1}^{\infty}\frac{nk^{2n}q^{2n}}{1-q^{2n}}
+\sum_{n=1}^{\infty}\frac{nq^{2n}/k^{2n}}{1-q^{2n}}
-\sum_{n=1}^{\infty}\frac{nk^nq^n}{1-q^{n}}
-\sum_{n=1}^{\infty}\frac{nq^n/k^n}{1-q^{n}}\\
=\frac{k(1-k^3)}{(1-k)(1-k^2)^2}
-k\frac{(q,q,-k^2,-q/k^2;q)_{\infty}(q^2,q^2;q^2)_{\infty}}
{(k^2,k^2,q^2/k^2,q^2/k^2;q^2)_{\infty}}\\
-k^2\frac{(k^2q,k^2q,q/k^2,q/k^2,q^2,q^2,q^2,q^2;q^2)_{\infty}}
{(k^2,k^2,q^2/k^2,q^2/k^2,q,q,q,q;q^2)_{\infty}}.
\end{multline}
\end{corollary}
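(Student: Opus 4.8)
The plan is to take a limit of the reciprocity relation \eqref{6psi6eq1aa} as $a\to k^2$, after dividing by $1-k^2/a$, in the same spirit as the derivation of \eqref{GLambeq} (where $G(k,q)$ was produced by differentiating $F(a,k,q)$ at $a=k^2$). The starting point is that $F(a,k,q)$ vanishes at $a=k^2$: as was noted there, each series and each infinite product in \eqref{6psi6eq1a} then acquires a factor $(k^2/a;q)_n=(1;q)_n$, $(k^2/a;q^2)_n=(1;q^2)_n$, or $(k^2/a;q^2)_\infty=(1;q^2)_\infty$, and so is killed. Replacing $k$ by $1/k$ (equivalently, using $1/k^2=(1/k)^2$) gives also $F(1/k^2,1/k,q)=0$. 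Hence in
\[
L:=\lim_{a\to k^2}\frac{F(a,k,q)-F(1/a,1/k,q)}{1-k^2/a}
\]
each summand is of indeterminate form $0/0$ at $a=k^2$.

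By \eqref{FGeq} the first summand tends to $G(k,q)$. For the second, put $b=1/a$ and apply \eqref{FGeq} with $k$ replaced by $1/k$: since $1-(1/k^2)/b=1-a/k^2$, this says $\lim_{a\to k^2}F(1/a,1/k,q)/(1-a/k^2)=G(1/k,q)$, and as the elementary ratio $(1-a/k^2)/(1-k^2/a)=-a/k^2$ tends to $-1$, we get $\lim_{a\to k^2}F(1/a,1/k,q)/(1-k^2/a)=-G(1/k,q)$. Therefore $L=G(k,q)+G(1/k,q)$, the left side of \eqref{Geq2}. The middle expression of \eqref{Geq2} is then immediate: \eqref{GLambeq} gives $G(k,q)$ as the displayed combination of Lambert series, and adding to it the same identity with $k$ replaced by $1/k$ produces exactly the stated combination (the two copies of $\sum_{n\ge1}nq^n/(1-q^{2n})$ merging into $\sum_{n\ge1}2nq^n/(1-q^{2n})$).

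For the closed form in \eqref{Geq2}, I would compute $L$ a second time, now invoking Theorem \ref{t1}, which identifies $L$ with the limit as $a\to k^2$ of the right side of \eqref{6psi6eq1aa} divided by $1-k^2/a$. Each of the three terms there vanishes at $a=k^2$ — the first through $(k^2/a;q^2)_\infty$, the second through $(k^2/a;q)_\infty$, the third through $(a-k^2)$ — and, under the stated hypotheses, these are simple zeros, with no competing vanishing in the denominators. So the limit is evaluated term by term by cancelling the offending factor against $1-k^2/a$, using $(k^2/a;q^2)_\infty/(1-k^2/a)=(k^2q^2/a;q^2)_\infty$, $(k^2/a;q)_\infty/(1-k^2/a)=(k^2q/a;q)_\infty$, and $(a-k^2)/\bigl(a(1-k^2/a)\bigr)=1$, and then setting $a=k^2$ in what remains. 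A short tidying of the surviving $q^2$-products — in the first term one rewrites $(k^2/q;q^2)_\infty=(1-k^2/q)(k^2q;q^2)_\infty$ and $(q^3/k^2;q^2)_\infty=(q/k^2;q^2)_\infty/(1-q/k^2)$, whose product contributes the scalar $-k^2/q$ — delivers the two infinite products of \eqref{Geq2}, and the elementary term gives $k(1-k^3)/\bigl((1-k)(1-k^2)^2\bigr)$. The only genuine work is this final step: tracking which factors survive and, above all, the signs, both the $-k^2/q$ just mentioned and the sign in $a(a^2-k)/\bigl((1-a)(1-k)(a^2-k^2)\bigr)$ at $a=k^2$.
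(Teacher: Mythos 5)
Your proposal is correct and follows exactly the route the paper indicates for this corollary: evaluate $\lim_{a\to k^2}\bigl(F(a,k,q)-F(1/a,1/k,q)\bigr)/(1-k^2/a)$, obtaining $G(k,q)+G(1/k,q)$ (and the Lambert-series form via \eqref{FGeq} and \eqref{GLambeq}) on one side and the limit of the right side of \eqref{6psi6eq1aa} on the other, with the simple zeros cancelled against $1-k^2/a$. Your sign bookkeeping (the $-G(1/k,q)$ from the ratio $(1-a/k^2)/(1-k^2/a)\to -1$, and the $-k^2/q$ factor in the first product) is exactly the detail the paper leaves to the reader, and it checks out.
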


The special case of the this identity that follows from inserting the trivial pair \eqref{tp} into the term $G(k,q)+G(1/k,q)$ also follows from Corollary 12 in  \cite{McL09c}, upon dividing the identity there by $1-b$ and then letting $b \to 1$.

As well as implying some of the known identities relating Lambert
series and infinite products, the term $G(k,q)+G(1/k,q)$ in
\eqref{Geq2} also provides an additional expression for the Lambert
series or the infinite product in terms of series involving an
arbitrary WP-Bailey pair $(\alpha_n(a,k),\beta_n(a,k))$ (with
$a=k^2$ and $k$ specialized as required). We give two examples.

First recall that
\[
\phi(q):=\sum_{n=-\infty}^{\infty}q^{n^2}=(-q;q^2)_{\infty}^2(q^2;q^2)_{\infty}
=\frac{(-q,q^2;q^2)_{\infty}}{(q,-q^2;q^2)_{\infty}}.
\]
 \begin{corollary}
 If $|q|<1$, then
 {\allowdisplaybreaks\begin{align}\label{phi4eq}
 1+8\sum_{n=1}^{\infty}
 \frac{nq^n}{1+(-q)^n}&=\phi^4(q)\\
 &=1+4\sum_{n=1}^{\infty}
 \frac{(1-iq^{2n})(q,q;q)_{n-1}(-1;q)_{2n}(-iq)^n}
 {(1-i)(iq,iq;q)_n(q;q)_{2n}}\notag\\
 &\phantom{=1}+4\sum_{n=1}^{\infty}
 \frac{(1+iq^{2n})(q,q;q)_{n-1}(-1;q)_{2n}(iq)^n}
 {(1+i)(-iq,-iq;q)_n(q;q)_{2n}}.\notag
 \end{align}}
 \end{corollary}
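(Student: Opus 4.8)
The plan is to specialize the master identity \eqref{Geq2} at $k=i$ (so $k^2=1/k^2=-1$ and $1/k=-i$), and separately to evaluate $G(i,q)$ and $G(-i,q)$ from the definition \eqref{6psi6eq1b} by inserting the trivial WP-Bailey pair \eqref{tp} with $a$ replaced by $k^2$. First I would evaluate the closed-form side of \eqref{Geq2} at $k=i$. The rational term $k(1-k^3)/\bigl((1-k)(1-k^2)^2\bigr)$ collapses to $-1/4$. The first infinite-product term contains the factor $(-k^2;q)_\infty=(1;q)_\infty=0$ and so drops out. In the last product term every base becomes $\pm q$ or $\pm q^2$; using the doubling identity $(-1;q^2)_\infty=2(-q^2;q^2)_\infty$ to clear the $(-1;q^2)_\infty^2$ appearing in its denominator, together with the product form $\phi(q)=(-q,q^2;q^2)_\infty/(q,-q^2;q^2)_\infty$ recalled just before the corollary, that term reduces to $\phi^4(q)/4$. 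Hence the closed-form side of \eqref{Geq2} at $k=i$ is $(\phi^4(q)-1)/4$, so that $G(i,q)+G(-i,q)=(\phi^4(q)-1)/4$.

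Next I would identify the Lambert-series side of \eqref{Geq2} at $k=i$. Here $k^{2n}=1/k^{2n}=(-1)^n$, while $k^n+1/k^n=i^n+(-i)^n$ equals $2$, $-2$, or $0$ according as $n\equiv 0$, $2$, or is odd mod $4$. Splitting the three surviving Lambert series according to the residue of $n$ modulo $4$ and recombining, everything telescopes via the elementary identities $\dfrac{q^{2m}-q^{4m}}{1-q^{4m}}=\dfrac{q^{2m}}{1+q^{2m}}$ and $\dfrac{q^{2m-1}+q^{4m-2}}{1-q^{4m-2}}=\dfrac{q^{2m-1}}{1-q^{2m-1}}$, leaving $2\sum_{n\ge1}nq^n/(1+(-q)^n)$. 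Comparing with the previous paragraph gives $1+8\sum_{n\ge1}nq^n/(1+(-q)^n)=\phi^4(q)$, the first asserted equality. (Alternatively this is Jacobi's four-square formula in disguise and could simply be cited.)

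Finally I would produce the two hypergeometric series. Inserting \eqref{tp} with $a=k^2$ into \eqref{6psi6eq1b}, the $\alpha$-sums vanish since $\alpha_n=0$ for $n\ge1$, while $\beta_n(k^2,k)=(k,1/k;q)_n/(k^2q,q;q)_n$; at $k=i$ this is $(i;q)_n(-i;q)_n/\bigl((-q;q)_n(q;q)_n\bigr)=(-1;q^2)_n/(q^2;q^2)_n$, using $(x;q)_n(-x;q)_n=(x^2;q^2)_n$ twice. Multiplying by the remaining factor $(-q;q^2)_n/(q;q^2)_n$ from \eqref{6psi6eq1b} and applying $(-1;q^2)_n(-q;q^2)_n=(-1;q)_{2n}$ and $(q;q^2)_n(q^2;q^2)_n=(q;q)_{2n}$ yields exactly $4G(i,q)=$ the first series on the right of \eqref{phi4eq}, and likewise $4G(-i,q)=$ the second series. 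Adding $1$ to $4G(i,q)+4G(-i,q)=4\cdot(\phi^4(q)-1)/4=\phi^4(q)-1$ gives \eqref{phi4eq}.

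The conceptual content here is light: the result is a routine specialization of \eqref{Geq2}. I expect the only real work — and the main source of potential error — to be the bookkeeping in the two evaluations, namely correctly tracking the integer powers of $i$, clearing $(-1;q^2)_\infty$ against $(-q^2;q^2)_\infty$ in the product simplification, and carrying out the residue-mod-$4$ regrouping of the Lambert series without sign slips.
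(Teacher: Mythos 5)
Your proposal is correct and follows essentially the same route as the paper: set $k=i$ in \eqref{Geq2}, combine the Lambert series into $2\sum_{n\ge1} nq^n/(1+(-q)^n)$, simplify the product side to $\tfrac14(\phi^4(q)-1)$ (with the middle product vanishing because of the $(1;q)_\infty$ factor), and obtain the two hypergeometric series by inserting the trivial pair \eqref{tp} with $a=k^2$ into $G(\pm i,q)$, using $(i,-i;q)_n/(-q,q;q)_n=(-1;q^2)_n/(q^2;q^2)_n$. The computations you flag as bookkeeping all check out.
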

\begin{proof}
Let $k=i$ in \eqref{Geq2}. It is not difficult to show that the Lambert series combine to give
\[
2 \sum_{n=1}^{\infty}\frac{2nq^{2n}}{1+q^{2n}}
+2 \sum_{n=1}^{\infty}\frac{(2n-1)q^{2n-1}}{1-q^{2n-1}}
=2\sum_{n=1}^{\infty}
 \frac{nq^n}{1+(-q)^n}.
\]
It is also easy to see that the infinite product side simplifies to give
\[
-\frac{1}{4} +\frac{1}{4}\phi^{4}(q).
\]
For $G(i,q)+G(1/i,q)$, we use \eqref{6psi6eq1b} with $k=i$ and insert the trivial WP-Bailey pair \eqref{tp} (with $a=k^2$ and then $k=i$):
\begin{align*}
\alpha_{n}&=
0, \,n>0, \\
\beta_n&=\frac{(i,-i;q)_n}{(-q,q;q)_n}=
\frac{(-1;q^2)_n}{(q^2;q^2)_n}.\notag
\end{align*}
Multiply the resulting set of equalities by 4, add 1, and the identities at \eqref{phi4eq} follow.
\end{proof}
Remarks: The first equality at \eqref{phi4eq} is due to Jacobi  \cite{J29}.  Also, if other WP-Bailey pairs are used instead of the trivial pair above, then still further representations for  $\phi^{4}(q)$ will result.

\begin{corollary}\label{c9}
Let $\omega:=\exp(2 \pi i/3)$,   let  $\chi_0(n)$ denote the
principal character modulo 3, and let $\psi(q)$ be as defined at
\eqref{psieq}. Then {\allowdisplaybreaks\begin{align}\label{chi3eq}
9\sum_{n=1}^{\infty}\chi_0(n)\frac{nq^n}{1-q^{2n}}&=\frac{\psi^6(q)}{\psi^2(q^3)}
-\frac{\psi^3(q^{1/2})\psi^3(-q^{1/2})}{\psi(q^{3/2})\psi(-q^{3/2})}\\
&=3(1-\omega^2)\sum_{n=1}^{\infty}\frac{(1-\omega
q^{2n})(q;q)_{n-1}(\omega^2q;q^2)_n(-\omega q)^n}
{(1-q^{3n})(\omega q;q)_n(q;q^2)_n}\notag\\
&\phantom{asd}+3(1-\omega)\sum_{n=1}^{\infty}\frac{(1-\omega^2
q^{2n})(q;q)_{n-1}(\omega q;q^2)_n(-\omega^2 q)^n}
{(1-q^{3n})(\omega^2 q;q)_n(q;q^2)_n}.\notag
\end{align}}
\end{corollary}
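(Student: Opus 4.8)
\emph{Proof proposal.} The identity \eqref{chi3eq} will be obtained as the special case $k=\omega$ of \eqref{Geq2}, multiplied through by $3$, with the trivial WP-Bailey pair \eqref{tp} (taken with $a=k^2$) inserted into the term $G(k,q)+G(1/k,q)$. First I would substitute $k=\omega$ into \eqref{Geq2}, noting $k^2=\omega^2$, $1/k=\omega^2$, $1/k^2=\omega$, and $k^3=1$; the rational term $k(1-k^3)/((1-k)(1-k^2)^2)$ then vanishes. For the Lambert-series line, substituting $k=\omega$ and using $\omega^n+\omega^{2n}=2-3\chi_0(n)$ together with the elementary identity $\tfrac{nq^n}{1-q^n}-\tfrac{nq^{2n}}{1-q^{2n}}=\tfrac{nq^n}{1-q^{2n}}$ collapses the five series down to $3\sum_{n\ge 1}\chi_0(n)\,nq^n/(1-q^{2n})$, which after multiplication by $3$ is the left side of \eqref{chi3eq}.

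Next I would reduce the infinite-product line of \eqref{Geq2} at $k=\omega$ to the two $\psi$-quotients. The key tools are the cyclotomic factorizations $(1-\omega x)(1-\omega^2 x)=1+x+x^2=\tfrac{1-x^3}{1-x}$ (applied to the even-base products, e.g. $(\omega^2,q^2)$-factors giving $(q^6;q^6)_\infty/(q^2;q^2)_\infty$ type products) and $(1+\omega x)(1+\omega^2 x)=1-x+x^2=\tfrac{1+x^3}{1+x}$ (applied to the $(-k^2,-q/k^2;q)_\infty$ factor), together with $1+\omega^2=-\omega$ and $(1-\omega)^2=-3\omega$, $(1-\omega^2)^2=-3\omega^2$. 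Carrying this out, the ``$-k^2$'' product term becomes $\tfrac13\,\psi^6(q)/\psi^2(q^3)$ after recognizing $\psi(q)=(q^2;q^2)_\infty/(q;q^2)_\infty$ and $\psi(q^3)=(q^6;q^6)_\infty/(q^3;q^6)_\infty$, while the ``$-k$'' product term becomes $-\tfrac13\,\psi^3(q^{1/2})\psi^3(-q^{1/2})/(\psi(q^{3/2})\psi(-q^{3/2}))$ after using $\psi(q^{1/2})\psi(-q^{1/2})=(q;q)_\infty(q^2;q^2)_\infty$ and the analogous identity with $q$ replaced by $q^3$, and the elementary product identity $(-q^3;q^3)_\infty(q^3;q^3)_\infty=(q^6;q^6)_\infty$. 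Multiplying by $3$ yields the second member of \eqref{chi3eq}.

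Finally, for the third member I would insert \eqref{tp} with $a=k^2$ into the definition \eqref{6psi6eq1b} of $G(k,q)$: since $\alpha_n=0$ for all $n\ge 1$, every $\alpha$-sum (and the infinite-product-times-$\alpha$-sum) in \eqref{6psi6eq1b} vanishes, leaving only the $\beta$-sum with $\beta_n(k^2,k)=(k,1/k;q)_n/(k^2q,q;q)_n$. Setting $k=\omega$ (so $1/k=\omega^2$) and simplifying with $(\omega q;q)_n(\omega^2 q;q)_n=(q^3;q^3)_n/(q;q)_n$, $(\omega;q)_n/(\omega q;q)_n=(1-\omega)/(1-\omega q^n)$, $(1-\omega)(1-\omega^2)=3$, and $(1-\omega q^n)(1-\omega^2 q^n)=(1-q^{3n})/(1-q^n)$, one obtains
\[
G(\omega,q)=(1-\omega^2)\sum_{n=1}^{\infty}\frac{(1-\omega q^{2n})(q;q)_{n-1}(\omega^2 q;q^2)_n(-\omega q)^n}{(1-q^{3n})(\omega q;q)_n(q;q^2)_n},
\]
and by the symmetry $\omega\leftrightarrow\omega^2$,
\[
G(\omega^2,q)=(1-\omega)\sum_{n=1}^{\infty}\frac{(1-\omega^2 q^{2n})(q;q)_{n-1}(\omega q;q^2)_n(-\omega^2 q)^n}{(1-q^{3n})(\omega^2 q;q)_n(q;q^2)_n}.
\]
Multiplying $G(\omega,q)+G(\omega^2,q)$ by $3$ produces the final expression in \eqref{chi3eq}, completing the proof.

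\emph{Main obstacle.} The delicate part is the product bookkeeping in the second paragraph: tracking the many $(1-\omega^{j}q^{m})$ and $(1+\omega^{j}q^{m})$ factors, rewriting them as products over $q^3$ or $q^6$, and in particular recognizing how half-integer arguments $q^{1/2},q^{3/2}$ emerge from integer-base $q$-products via $\psi(q^{1/2})\psi(-q^{1/2})=(q;q)_\infty(q^2;q^2)_\infty$. A secondary point that must be checked carefully is that \emph{all} $\alpha$-contributions to $G$ really do vanish for the trivial pair, since the $\alpha_{2n+1}$-sum in \eqref{6psi6eq1b} begins at $n=0$.
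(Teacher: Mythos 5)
Your proposal is correct and follows essentially the same route as the paper: set $k=\omega$ in \eqref{Geq2}, combine the Lambert series into $3\sum_{n\ge1}\chi_0(n)nq^n/(1-q^{2n})$, reduce the product side to $\tfrac13\psi^6(q)/\psi^2(q^3)-\tfrac13\psi^3(q^{1/2})\psi^3(-q^{1/2})/(\psi(q^{3/2})\psi(-q^{3/2}))$, and evaluate $G(\omega,q)+G(\omega^2,q)$ by inserting the trivial pair \eqref{tp} with $a=k^2$ into \eqref{6psi6eq1b}, finally multiplying through by $3$. Your intermediate simplifications (vanishing of the rational term at $k=\omega$, the cyclotomic product factorizations, and the reduction of $\beta_n(\omega^2,\omega)$ using $(1-q^n)(1-\omega q^n)(1-\omega^2 q^n)=1-q^{3n}$) check out and match the "simple manipulations" the paper leaves implicit.
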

\begin{proof}
The proof is similar to that of the previous corollary, except we set $k=\omega$ in \eqref{Geq2}. The Lambert series combine to give
\[
3 \sum_{n=1}^{\infty}\frac{(3n-1)q^{3n-1}}{1-q^{6n-2}}
+3 \sum_{n=1}^{\infty}\frac{(3n-2)q^{3n-2}}{1-q^{6n-4}}
=3\sum_{n=1}^{\infty}\chi_0(n)\frac{nq^n}{1-q^{2n}}.
\]
The infinite product side simplifies to give
\[
\frac{1}{3}\frac{\psi^6(q)}{\psi^2(q^3)}
-\frac{1}{3}\frac{\psi^3(q^{1/2})\psi^3(-q^{1/2})}{\psi(q^{3/2})\psi(-q^{3/2})}.
\]
Once again, for $G(\omega,q)+G(1/\omega,q)$ we use \eqref{6psi6eq1b}, this time with $k=\omega$ and  the trivial WP-Bailey pair \eqref{tp} (with $a=k^2$ and then $k=\omega$):
\begin{align*}
\alpha_{n}&=
0, \,n>0, \\
\beta_n&=\frac{(\omega,\omega^2;q)_n}{(\omega^2q,q;q)_n}.\notag
\end{align*}
Multiply the resulting set of equalities by 3, and the identities at \eqref{chi3eq} follow, after some simple manipulations of the expressions for $G(\omega,q)$ and $G(\omega^2,q)$.
\end{proof}

Remark: The Lambert series in the identity at \eqref{chi3eq} may also
be represented in terms of the theta series $a(q)$   defined at\eqref{aqeq}, upon noting that
\[
\sum_{n=1}^{\infty}\chi_0(n)\frac{nq^n}{1-q^{2n}}
=\sum_{n=1}^{\infty}\chi_0(n)\frac{nq^n}{1-q^{n}}
-\sum_{n=1}^{\infty}\chi_0(n)\frac{nq^{2n}}{1-q^{2n}},
\]
and employing an identity of Ramanujan from the Lost Notebook, \textbf{Entry 18.2.9} (see \cite{AB05}, page 402), which states that
\begin{equation*}
a^2(q)=1+12\sum_{n=1}^{\infty}\chi_0(n)\frac{nq^n}{1-q^{n}}.
\end{equation*}

 \allowdisplaybreaks{

}
\end{document}